\newcommand{\N}{\mathbb{N}}
\newcommand{\Z}{\mathbb{Z}}
\newcommand{\R}{\mathbb{R}}
\newcommand{\C}{\mathbb{C}}
\newcommand{\HH}{\mathbb{H}}
\newcommand{\SMA}{\mathcal{S}^{\A}_{\M}}
\newcommand{\DM}{\mathcal{D}^{{\M}}}
\newcommand{\SAM}{\mathcal{S}^{\M}_{\A}}
\newcommand{\CM}{C_{\M}[0,\infty)}
\newcommand{\M}{{\bf{M}}}
\newcommand{\NN}{{\bf{N}}}
\newcommand{\A}{{\bf{A}}}
\newcommand{\dx}{{\rm d}x }
\newcommand{\dt}{{\rm d}t }
\newcommand{\dxi}{{\rm d}\xi }
\newcommand{\supp}{\operatorname{supp}}
\newcommand{\lc}{\operatorname{(lc)}}
\newcommand{\wlc}{\operatorname{(wlc)}}
\newcommand{\dc}{\operatorname{(dc)}}
\newcommand{\mg}{\operatorname{(mg)}}
\newcommand{\nq}{\operatorname{(nq)}}
\newcommand{\snq}{\operatorname{(snq)}}
\newcommand{\om}{\omega}
\newcommand{\ga}{\gamma}
\newtheorem{theorem}{Theorem}[section]
\newtheorem{proposition}[theorem]{Proposition}
\newtheorem{lemma}[theorem]{Lemma}
\newtheorem{corollary}[theorem]{Corollary}
\theoremstyle{definition}
\newtheorem{example}[theorem]{Example}
\theoremstyle{remark}
\newtheorem{remark}[theorem]{Remark}
\numberwithin{equation}{section}
\DeclareRobustCommand\widecheck[1]{{\mathpalette\@widecheck{#1}}}
\def\@widecheck#1#2{%
    \setbox\z@\hbox{\m@th$#1#2$}%
    \setbox\tw@\hbox{\m@th$#1%
       \widehat{%
          \vrule\@width\z@\@height\ht\z@
          \vrule\@height\z@\@width\wd\z@}$}%
    \dp\tw@-\ht\z@
    \@tempdima\ht\z@ \advance\@tempdima2\ht\tw@ \divide\@tempdima\thr@@
    \setbox\tw@\hbox{%
       \raise\@tempdima\hbox{\scalebox{1}[-1]{\lower\@tempdima\box
\tw@}}}%
    {\ooalign{\box\tw@ \cr \box\z@}}}
\begin{document}

\title[Stieltjes moment mapping in Gelfand-Shilov spaces]{Injectivity and surjectivity of the Stieltjes\\
moment mapping in Gelfand-Shilov spaces}

\author{Andreas Debrouwere}
\address{Department of Mathematics, Louisiana State University, United States}
\email{adebrouwere@lsu.edu}

\author{Javier Jim\'enez-Garrido}
\address{Instituto de Investigaci\'on en Matem\'aticas IMUVA\newline \indent Universidad de Valladolid, Valla\-dolid, Spain}
\email{jjjimenez@am.uva.es}

\author{Javier Sanz}
\address{Departmento de \'Algebra, An\'alisis Matem\'atico, Geometr{\'\i}a y Topolog{\'\i}a\newline
\indent Instituto de Investigaci\'on en Matem\'aticas IMUVA\newline
\indent Universidad de Valladolid, Valladolid, Spain}
\email{jsanzg@am.uva.es}

\subjclass[2010]{30E05, 44A60, 46F05}
\keywords{The Stieltjes moment problem, Gelfand-Shilov spaces}
\begin{abstract}

The Stieltjes moment problem is studied in the framework of general Gelfand-Shilov spaces defined via weight sequences. We characterize the injectivity and surjectivity of the Stieltjes moment mapping, sending a function to its sequence of moments, in terms of growth conditions for the defining weight sequence. Finally, a related moment problem at the origin is  studied.

\end{abstract}

\maketitle

\section{Introduction}
The moment problem, with its many variations and generalizations, has a long and rich tradition that goes back to the seminal work of Stieltjes \cite{Stieltjes}. In 1939, Boas \cite{Boas} and P\'olya \cite{Polya} independently showed that, for every sequence $(c_p)_{p=0}^\infty$ of complex numbers,
there is a function $F$ of bounded variation such that
$$
\int_0^\infty x^p {{\rm d}F(x) }=c_p,\qquad p\in\N=\{0,1,2,\ldots\}.
$$
This result was greatly improved by A.~J.~Dur\'an \cite{Duran} in 1989 who proved, in a constructive way,  that, for every sequence $(c_p)_{p \in \N}$ of complex numbers, the infinite system of linear equations
\begin{equation}
\int_0^\infty x^p \varphi(x) \dx =c_p,\qquad p \in \N,
\label{moment-problem}
\end{equation}
admits a solution $\varphi \in \mathcal{S}(0,\infty)$, that is,  $\varphi$ belongs to the Schwartz space $\mathcal{S}(\R)$ of rapidly decreasing smooth functions and $\operatorname{supp} \varphi \subseteq [0,\infty)$. We would like to point out  that this result also follows from a short non-constructive argument via Eidelheit's theorem \cite[Thm.\ 26.27]{M-V}.

In this article, we study the (unrestricted) Stieltjes moment problem \eqref{moment-problem} in the context of Gelfand-Shilov spaces defined via weight sequences \cite{G-S}; see \cite{ChungKimYeom,L-S08,L-S09} for earlier works in this direction. Namely,  given two sequences of positive real numbers $\M=(M_p)_{p\in\N}$ and $\A=(A_p)_{p\in\N}$, we consider the spaces
$\mathcal{S}_{\M}^{\A}(0,\infty)$ and $\mathcal{S}_{\M}(0,\infty)$
consisting of all $\varphi \in \mathcal{S}(0,\infty)$ such that there exists $h>0$ with
$$
\sup_{p,q \in \N} \sup_{x \in \R} \frac{|x^p\varphi^{(q)}(x)|}{h^{p+q}p!M_pq!A_q} < \infty
$$
and
$$
\sup_{p\in \N} \sup_{x \in \R} \frac{|x^p\varphi^{(q)}(x)|}{h^{p}p!M_p} < \infty, \qquad \forall q \in \N,
$$
respectively. Obviously, $\mathcal{S}_{\M}^{\A}(0,\infty)\subset\mathcal{S}_{\M}(0,\infty)$. Now suppose that $\M$ is derivation closed, that is,  $M_{p+1} \leq C_0H^{p+1}M_p$, $p \in \N$, for some $C_0,H \geq 1$. Clearly, for every $\varphi \in \mathcal{S}_{\M}(0,\infty)$, its sequence of moments $(\mu_p(\varphi))_{p\in\N} = (\int_0^\infty x^p \varphi(x) \dx)_{p \in \N}$ belongs to the sequence space
$
\Lambda_{\M} = \{ (c_p)_{p \in \N} \, | \, \exists h > 0 \, : \,   \sup_{p \in \N} \frac{|c_p|}{h^p p!M_p} < \infty  \}.
$
It is then natural to ask about the surjectivity of the Stieltjes moment mapping $\mathcal{M}$, given by $\mathcal{M}(\varphi)=(\mu_p(\varphi))_{p\in\N}$, when defined on either $\mathcal{S}_{\M}^{\A}(0,\infty)$ or $\mathcal{S}_{\M}(0,\infty)$ and with range $\Lambda_{\M}$.
As a first result, following a technique of A.~L.~Dur\'an and R.~Estrada~\cite{D-E} that combines the Fourier transform with the
Borel-Ritt theorem from asymptotic analysis, S.-Y.~Chung, D.~Kim and Y.~Yeom~\cite[Thm.\ 3.1]{ChungKimYeom} proved the surjectivity of $\mathcal{M}$ for $\mathcal{S}_{\M}(0,\infty)$ with $\M=(p!^{\alpha-1})_{p\in\N}$ (the Gevrey sequence) whenever $\alpha>2$. Subsequently, A. Lastra and the third author~\cite{L-S08} refined this result by obtaining linear continuous right inverses for the Stieltjes moment mapping between suitable Fr\'echet subspaces of $\mathcal{S}_{p!^{\alpha-1}}(0,\infty)$ and extended this result \cite{L-S09} to $\mathcal{S}_{\M}(0,\infty)$ for general strongly regular sequences $\M$, that is, sequences $\M$ that are log-convex, of moderate growth and strongly nonquasianalytic,  whose growth index $\gamma(\M)$ is strictly greater than 1 (see Section \ref{sect-prelim} for the definition of these conditions and $\gamma(\M)$). Conversely, it was proven in \cite{L-S09} that if $\M$ is strongly regular,  $\mathcal{M}:\mathcal{S}_{\M}(0,\infty)\to\Lambda_{\M}$ is surjective and
\begin{eqnarray}\label{condgammaM}
\sum_{p=0}^\infty\Big(\frac{M_p}{M_{p+1}}\Big)^{1/\ga(\M)}=\infty,
\end{eqnarray}
then $\gamma(\M)>1$.

Our aim is to improve and complete these results by including the spaces $\mathcal{S}_{\M}^{\A}(0,\infty)$ in our considerations, by dropping some hypotheses on $\M$, specially moderate growth and~\eqref{condgammaM}, and by also studying the injectivity of the Stieltjes moment mapping. Our key tools are: a better understanding of the meaning of the different growth conditions usually imposed on the sequence $\M$ and their expression in terms of indices of O-regular variation, as developed in~\cite{JimenezSanzSchindlIndex}; the use of the Fourier transform in order to translate our problems into the corresponding ones for the asymptotic Borel mapping in certain ultraholomorphic classes on the upper half-plane; the enhanced information obtained in~\cite{JG-S-S} about the injectivity and surjectivity of the asymptotic Borel mapping for sequences $\M$ subject to minimal conditions.

The paper is organized as follows. In the preliminary Section \ref{sect-prelim} we gather the main facts needed regarding sequences, ultraholomorphic classes, the asymptotic Borel mapping $\mathcal{B}$ and growth indices related to the injectivity and surjectivity of $\mathcal{B}$, Gelfand-Shilov spaces and the Laplace transform.
Section~\ref{sect-Stiel-mom-problem} contains the main results. Firstly, in Theorem~\ref{inj-stiel}, we characterize the injectivity of the Stieltjes moment mapping $\mathcal{M}$ (defined on either $\mathcal{S}_{\M}^{\A}(0,\infty)$ or $\mathcal{S}_{\M}(0,\infty)$)  by an easy condition on the sequence $\M$, under minimal conditions on both $\M$ and $\A$. In Theorem~\ref{surj-stiel-modified} the surjectivity of $\mathcal{M}$ is characterized by the condition $\gamma(\M)>1$, for  $\M$ log-convex and of moderate growth and $\A$ weakly log-convex and non-quasianalytic. In particular, this result significantly improves those in~\cite{L-S09} and, moreover, extends to a general situation the statement of surjectivity of $\mathcal{M}$ in the case of the space $\mathcal{S}_{(p!^{\alpha-1})}^{(p!^{\beta-1})}(0,\infty)$, with $\alpha>2$ and $\beta>1$, that appeared (without proof) in~\cite[Thm.\ 3.3]{ChungKimYeom} and which is, up to the best of our knowledge, the only known result dealing with spaces of the type $\mathcal{S}_{\M}^{\A}(0,\infty)$. If moderate growth for $\M$ is substituted by the weaker condition of derivation closedness, we are only able to prove that $\gamma(\M)>1$ is necessary for the surjectivity of $\mathcal{M}$. We conclude this section by showing that the Stieltjes moment mapping is never bijective and that there exist strongly regular  sequences for which $\mathcal{M}$  is neither injective nor surjective. The final Section \ref{sect-mom_prob-orig} is devoted to the study of a related moment problem ``at the origin''. More precisely, we consider the space $\mathcal{D}^{\M}(0,1)$ consisting of all $\varphi \in C^\infty(\R)$ with $\supp \varphi \subseteq [0,1]$ such that there exists $h>0$ with
$$
\sup_{p \in \N} \sup_{x \in [0,1]} \frac{|\varphi^{(p)}(x)|}{h^{p}p!M_p} < \infty
$$
and we define
$$
\mu^0_p(\varphi) = \int_0^1 \frac{\varphi(x)}{x^p} \dx,  \qquad p \in \N.
$$
The study of the injectivity and surjectivity of the mapping
$\mathcal{M}^0: \DM(0,1) \rightarrow \Lambda_{\widecheck\M}: \varphi \rightarrow (\mu^0_p(\varphi))_p$, where $\widecheck\M=(M_p/p!)_{p\in\N}$, is reduced to the one of $\mathcal{M}$ via a suitable use of the Fourier transform.

\section{preliminaries}\label{sect-prelim}
\subsection{Weight sequences}\label{subsect-seq}

We set $\N=\{0,1,2,\dots\}$.
Throughout this article $\M = (M_p)_{p \in \N}$  will stand for a sequence of positive real numbers with $M_0 = 1$. We define $m_p = M_{p+1}/M_p$, $p \in \N$. The sequence $\M$ is said to be a \emph{weight sequence} if $m_p \rightarrow \infty$ as $p \to \infty$. Furthermore, we set $\widehat{\M} = (p!M_p)_{p \in \N}$ and $\widecheck{\M} = (M_p/p!)_{p \in \N}$. We shall use the following conditions on  sequences $\M$:

\begin{itemize}
\item[$\lc$] $\M$ is \emph{log-convex} if $M^2_p \leq M_{p-1}M_{p+1}$, $p \in \Z_+=\{1,2,\dots\}$.
\item[$\wlc$] $\M$ is \emph{weakly log-convex} if $\widehat{\M}$ satisfies $\lc$.
\item[$\dc$] $\M$ is \emph{derivation closed} if $M_{p+1} \leq C_0H^{p+1}M_p$, $p \in \N$, for some $C_0,H \geq 1$.
\item[$\mg$] $\M$ has \emph{moderate growth} if $M_{p+q} \leq C_0H^{p+q}M_pM_q$, $p,q \in \N$, for some $C_0,H \geq 1$.
\item[$\nq$] $\M$ is \emph{non-quasianalytic} if $\displaystyle \sum_{p=0}^\infty \frac{1}{(p+1)m_p} < \infty$.
\item[$\snq$] $\M$ is \emph{strongly non-quasianalytic} if $\displaystyle \sum_{q=p}^\infty \frac{1}{(q+1)m_q} \leq  \frac{C}{m_p}$, $p \in \N$, for some $C > 0$.
\end{itemize}

\begin{remark}\label{rem_propertiesM}
All these properties are preserved when passing from $\M$ to $\widehat{\M}$. In particular, a sequence satisfying $\lc$ is also $\wlc$. However, only $\dc$ and $\mg$ are generally kept when going from $\M$ to $\widecheck{\M}$.
\end{remark}

A sequence $\M$ satisfying $\wlc$ and $\nq$ is easily proved to be a weight sequence. A sequence $\M$ is said to be \emph{strongly regular} if it satisfies $\lc$, $\mg$ and $\snq$ (so, $\M$ is a weight sequence). The Gevrey sequence $(p!^{\alpha})_{p}$ ($\alpha>0$) is strongly regular.

In the classical work of H.~Komatsu~\cite{Komatsu}, the properties $\lc$, $\dc$ and $\mg$ are denoted by $(M.1)$, $(M.2)'$ and $(M.2)$, respectively, while $\nq$ and $\snq$ for $\M$ are the same as properties $(M.3)'$ and $(M.3)$ for $\widehat{\M}$, respectively.

For later use, we recall that $\lc$ (together with $M_0=1$) implies that \begin{equation}\label{eq-conseq-lc}
M_jM_p\le M_{j+p},\quad j,p\in\N.
\end{equation}
Following Komatsu~\cite{Komatsu}, the relation $\M \subset \NN$ between two  sequences means that
there are $C,h > 0$ such that $M_p \leq Ch^pN_p$ for all $p \in \N$. The \emph{associated function} of $\M$ is defined as
$$
\omega_{\M}(t) := \sup_{p \in \N} \log \frac{t^p}{M_p}, \qquad t > 0,
$$
and $\omega_{\M}(0) := 0$.

The following technical lemma shall be used later on.
\begin{lemma} \label{dc-below} Let $\M$ be a (weight) sequence satisfying $\wlc$ and $\nq$. Then, there is a (weight) sequence $\NN$ with $\NN \subset  \M$ satisfying $\wlc$, $\dc$ and $\nq$.
\end{lemma}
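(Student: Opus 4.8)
The plan is to pass to the associated sequences and cap the quotients. Write $q_p := (p+1)m_p$, $p \in \N$, for the quotients of $\widehat\M = (p!M_p)_{p\in\N}$ (so $q_p = \widehat M_{p+1}/\widehat M_p$, where $\widehat M_p := p!M_p$). By definition, $\M$ satisfying $\wlc$ means precisely that $\widehat\M$ satisfies $\lc$, i.e.\ that $(q_p)_{p\in\N}$ is non-decreasing; since $\M$ is a weight sequence, $q_p \to \infty$; and $\M$ satisfying $\nq$ is exactly $\sum_{p\in\N} 1/q_p = \sum_{p\in\N} 1/((p+1)m_p) < \infty$. The construction is then to build $\NN$ from a truncated quotient sequence: set $r_p := \min\{q_p,\,2^p\}$, let $\widehat N_0 := 1$ and $\widehat N_p := \prod_{j=0}^{p-1} r_j$ for $p \ge 1$, and finally put $N_p := \widehat N_p/p!$. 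A geometric cap is what will buy $\dc$, while $2^p$ is still small enough ($\sum_p 2^{-p} < \infty$) not to destroy $\nq$, and capping by a non-decreasing sequence is what preserves log-convexity of the associated sequence.

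It then remains to run four essentially one-line verifications. First, $(r_p)_{p\in\N}$ is non-decreasing, being the pointwise minimum of two non-decreasing sequences, hence $\widehat\NN$ satisfies $\lc$ and so $\NN$ satisfies $\wlc$ (and $N_0 = 1$). Second, since $N_{p+1}/N_p = r_p/(p+1) \le 2^p \le 2^{p+1}$, the sequence $\NN$ satisfies $\dc$ (with $C_0 = 1$, $H = 2$). Third, writing $n_p := N_{p+1}/N_p$ we have $(p+1)n_p = r_p$, and from $1/r_p = \max\{1/q_p,\,2^{-p}\} \le 1/q_p + 2^{-p}$ we get $\sum_{p\in\N} 1/((p+1)n_p) = \sum_{p\in\N} 1/r_p < \infty$, i.e.\ $\NN$ satisfies $\nq$. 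Fourth, $r_j \le q_j$ for every $j$ yields $\widehat N_p \le \widehat M_p$, that is $N_p \le M_p$, whence $\NN \subset \M$ (with $C = h = 1$). Finally, a sequence that is $\wlc$ and $\nq$ is automatically a weight sequence, as recalled above, so $\NN$ is one as well; alternatively, $r_p \to \infty$ together with $\sum_p 1/r_p < \infty$ forces $n_p = r_p/(p+1) \to \infty$.

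I do not anticipate a genuine obstacle. The only point that needs care is the calibration of the cap: it must grow geometrically, so that passing from the quotients $q_p$ of $\widehat\M$ to $r_p$ produces a $\dc$ sequence, and yet its reciprocals must be summable so that $\nq$ survives — and $2^p$ (or any $c^p$ with $c>1$) threads this needle. A secondary point, that truncating a log-convex sequence by a non-decreasing sequence again gives a log-convex sequence, reduces to the elementary fact that the pointwise minimum of two non-decreasing sequences is non-decreasing.
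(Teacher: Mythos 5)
Your proposal is correct and is essentially the paper's own proof: the paper defines $a_p=\min\{2^{p+1},(p+1)m_p\}$ and $N_p=\frac{1}{p!}\prod_{j=0}^{p-1}a_j$, which is exactly your construction up to the immaterial shift $2^{p+1}$ versus $2^p$ in the geometric cap. The four verifications you carry out are precisely the ones the paper leaves as ``straightforward to check.''
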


\begin{proof}
Define $a_p = \min\{ 2^{p+1}, (p+1)m_p\}$, $p \in \N$,
and $N_0 = 1$; $N_p = \frac{1}{p!} \prod_{j = 0}^{p-1} a_j$, $p \in \Z_+$. It is straightforward to check that ${\NN} = (N_p)_{p \in \N}$ satisfies all the requirements.
\end{proof}

\subsection{Ultraholomorphic classes on the upper half-plane and the asymptotic Borel mapping}\label{subsect-ultrahol}
We write $\HH$ for the open upper half-plane of the complex plane $\C$ and, given an open set $\Omega \subseteq \C$, we denote by
$\mathcal{O}(\Omega)$ the space of holomorphic functions in $\Omega$. Let $\M$ be a weight sequence. For $h>0$
we define $\mathcal{A}_{\M,h}(\HH)$ as the space consisting of all $f \in \mathcal{O}(\HH)$ such that
$$
\sup_{p \in \N} \sup_{z \in \HH} \frac{|f^{(p)}(z)|}{h^pp!M_p} < \infty.
$$
We set $\mathcal{A}_{\M}(\HH) = \bigcup_{h>0} \mathcal{A}_{\M,h}(\HH)$. Next, for $h>0$ we define 
$\mathcal{E}^{\M,h}_{\infty}(\R)$ 
as the space consisting of all $f \in C^\infty(\R)$ such that
$$
\sup_{p \in \N} \sup_{x \in \R} \frac{|f^{(p)}(x)|}{h^pp!M_p} < \infty.
$$
We set $\mathcal{E}^{\M}_{\infty}(\R)= \bigcup_{h>0}\mathcal{E}^{\M,h}_{\infty}(\R)$.

The following result is standard; it follows from the fact that the elements of $\mathcal{A}_{\M,h}(\HH)$ together with all their derivatives are Lipschitz on $\HH$.

\begin{lemma}\label{extension}
Let $\M$ be a weight sequence and let $f \in \mathcal{A}_{\M,h}(\HH)$ for some $h > 0$. Then,
\begin{equation*}
f_p(x) = \lim_{z \in \HH, z \to x} f^{(p)}(z)  \in \C
\end{equation*}
exists for all $x \in \R$ and $p \in \N$. Moreover, $f_0 \in C^\infty(\R)$ and $(f_0)^{(p)} = f_p$ for all $p \in \N$.
Consequently, $f_0 \in \mathcal{E}^{\M,h}_{\infty}(\R)$.
\end{lemma}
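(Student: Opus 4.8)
The plan is to convert the hypothesis into a Lipschitz estimate for \emph{every} derivative $f^{(p)}$, use it to extend each $f^{(p)}$ continuously to $\overline{\HH}$ (which immediately yields the boundary limits), and then identify the boundary values of the derivatives with the derivatives of the boundary value of $f$ via a uniform-convergence argument. For \emph{Step 1} (existence of $f_p$): fix $p\in\N$. Since $f\in\mathcal{A}_{\M,h}(\HH)$, the quantity $L_p := h^{p+1}(p+1)!\,M_{p+1}$ bounds $|f^{(p+1)}|$ on $\HH$. Because $\HH$ is convex, for $z,w\in\HH$ the segment $[w,z]$ lies in $\HH$, so
\[
|f^{(p)}(z)-f^{(p)}(w)| = \Bigl| \int_{[w,z]} f^{(p+1)}(\zeta)\,\dzeta \Bigr| \le L_p\,|z-w| ,
\]
i.e. $f^{(p)}$ is $L_p$-Lipschitz, hence uniformly continuous, on $\HH$. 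As $\HH$ is dense in $\overline{\HH}$, $f^{(p)}$ has a unique continuous extension to $\overline{\HH}$, again $L_p$-Lipschitz; in particular $f_p(x) = \lim_{z\in\HH,\,z\to x} f^{(p)}(z)$ exists for every $x\in\R$, and $x\mapsto f_p(x)$ is $L_p$-Lipschitz on $\R$.

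For \emph{Step 2} ($f_0\in C^\infty(\R)$ and $(f_0)^{(p)}=f_p$): fix $p\in\N$ and put $g_n(x) = f^{(p)}(x+i/n)$ for $n\in\Z_+$; by holomorphy of $f^{(p)}$ we have $g_n\in C^\infty(\R)$ and $g_n'(x)=f^{(p+1)}(x+i/n)$. The Lipschitz bounds of Step 1 give $\sup_{x\in\R}|g_n(x)-f_p(x)|\le L_p/n$ and $\sup_{x\in\R}|g_n'(x)-f_{p+1}(x)|\le L_{p+1}/n$, so $g_n\to f_p$ and $g_n'\to f_{p+1}$ uniformly on $\R$. By the standard theorem on termwise differentiation of sequences (uniform convergence of the derivatives together with convergence at one point forces the limit to be differentiable with derivative the limit of the derivatives), $f_p$ is differentiable on $\R$ with $(f_p)'=f_{p+1}$. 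Applying this successively for $p=0,1,2,\dots$ yields $f_0\in C^\infty(\R)$ with $(f_0)^{(p)}=f_p$ for all $p\in\N$.

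For \emph{Step 3} (the estimate): for all $p\in\N$, $x\in\R$ we get $|(f_0)^{(p)}(x)|=|f_p(x)|=\lim_{z\to x}|f^{(p)}(z)|\le \sup_{z\in\HH}|f^{(p)}(z)|$, hence
\[
\sup_{p\in\N}\sup_{x\in\R}\frac{|(f_0)^{(p)}(x)|}{h^pp!M_p}\le \sup_{p\in\N}\sup_{z\in\HH}\frac{|f^{(p)}(z)|}{h^pp!M_p}<\infty ,
\]
which is precisely $f_0\in\mathcal{E}^{\M,h}_\infty(\R)$. The only step needing genuine care is Step 2: one must upgrade the mere existence of the pointwise boundary limits $f_p$ into a differentiability statement for $f_0$, and the right tool is the uniform convergence — on all of $\R$, in fact — of both $g_n$ and $g_n'$, which the Lipschitz estimates of Step 1 supply automatically. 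Steps 1 and 3 are routine.
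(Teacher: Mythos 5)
Your proof is correct and follows exactly the route the paper indicates (the paper only remarks that the result is standard and ``follows from the fact that the elements of $\mathcal{A}_{\M,h}(\HH)$ together with all their derivatives are Lipschitz on $\HH$'', which is precisely your Step 1; Steps 2 and 3 fill in the routine details). The only cosmetic slip is that $L_p$ should carry the finite constant $C=\sup_{q,z}|f^{(q)}(z)|/(h^qq!M_q)$ from the definition of $\mathcal{A}_{\M,h}(\HH)$, i.e.\ $L_p=C\,h^{p+1}(p+1)!\,M_{p+1}$; this changes nothing in the argument.
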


\begin{remark}\label{extension-rem}
 Let $\M$ be a weight sequence and let  $f \in \mathcal{A}_{\M}(\HH)$. In the sequel, we shall simply write
$$
f(x) = \lim_{z \in \HH, z \to x} f(z), \qquad    x \in \R.
$$
Lemma \ref{extension} states that $f$ is continuous on $\overline{\HH}$, $f_{|\R} \in \mathcal{E}^{\M}_{\infty}(\R)$ and that
$$
f^{(p)}(x) = \lim_{z \in \HH, z \to x} f^{(p)}(z)
$$
for all $x \in \R$ and $p \in \N$.
\end{remark}

Let $\M$ be a weight sequence. For $h>0$ we define $\Lambda_{\M,h}$  as the space consisting of all sequences $(c_p)_p \in \C^\N$ such that
$$
\sup_{p \in \N} \frac{|c_p|}{h^pp!M_p} < \infty.
$$
We set $\Lambda_{\M} = \bigcup_{h > 0} \Lambda_{\M,h}$. The \emph{asymptotic Borel mapping} is defined as
$$
\mathcal{B}: \mathcal{A}_{\M}(\HH) \rightarrow \Lambda_{\M}: f \rightarrow (f^{(p)}(0))_{p},
$$
which is well-defined by Lemma \ref{extension} (see also Remark \ref{extension-rem}).
For a fairly complete account on the injectivity and surjectivity of the asymptotic Borel mapping on various ultraholomorphic classes defined on arbitrary sectors we refer to \cite{JG-S-S}. There, two indices $\ga(\M)$ and $\om(\M)$, associated to the sequence $\M$, play a prominent role.
In~\cite[Ch.~2]{PhDJimenez} and \cite[Sect.~3]{JimenezSanzSchindlIndex}, the connections between these indices, the growth properties usually imposed on  sequences, and the theory of O-regular variation, have been thoroughly studied. We summarize here some facts. The first index, introduced by V. Thilliez~\cite[Sect.\ 1.3]{thilliez} for strongly regular sequences, may be defined for any weight sequence $\M$ satisfying $\lc$ as
$$\ga(\M):=\sup\{\mu>0\, | \,(m_{p}/(p+1)^\mu)_{p}\hbox{ is almost increasing} \}\in[0,\infty]
$$
(a sequence $(c_p)_{p}$ is \emph{almost increasing} if there exists $a>0$ such that $c_p\leq a c_q $ for all $ q\geq p$). On the other hand,
for  $\beta>0$ we say that $\M$  satisfies $(\ga_{\beta})$ if there is $C>0$ such that
\begin{equation*}
(\ga_{\beta})\qquad \sum^\infty_{q=p} \frac{1}{(m_q)^{1/\beta}}\leq \frac{C (p+1) }{(m_p)^{1/\beta}},  \qquad p\in\N.
\end{equation*}
Then one has that
$$ \ga(\M)=\sup\{\beta>0  \, | \, \M \,\, \text{satisfies} \,\, (\ga_{\beta})  \}.$$
Moreover, the following statements hold:
\begin{enumerate}
 \item[(i)] $\ga(\M)>0$ if and only if $\M$ satisfies $\snq$.
 \item[(ii)] $\ga(\M)>\beta$ if and only if $\M$ satisfies $(\ga_\beta)$.
\end{enumerate}

The surjectivity of the asymptotic Borel mapping can be characterized as follows.
\begin{theorem} \emph{(\cite[Thm.\ 4.17]{JG-S-S})}\label{surj-borel} Let $\M$ be a strongly regular weight sequence. Then, $\mathcal{B}: \mathcal{A}_{\M}(\HH) \rightarrow \Lambda_{\M}$ is surjective if and only if
$$
\sup_{p \in \N} \frac{m_p}{p+1}\sum_{q = p}^\infty \frac{1}{m_q} < \infty,
$$
or, equivalently, $\ga(\M)>1$.
\end{theorem}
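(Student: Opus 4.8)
The plan is to split the theorem into two parts: the equivalence of the two displayed conditions, which is immediate, and the genuine assertion that $\mathcal{B}$ is surjective if and only if $\ga(\M)>1$. For the equivalence, note that the inequality $\sup_{p\in\N}\frac{m_p}{p+1}\sum_{q=p}^\infty\frac{1}{m_q}<\infty$ is verbatim property $(\ga_1)$; hence, by fact (ii) recorded in Section~\ref{sect-prelim}, it holds precisely when $\ga(\M)>1$. So from now on only the characterization ``$\mathcal{B}$ surjective $\iff\ga(\M)>1$'' has to be proved, and it is convenient to view $\HH$ as a sector of opening $\pi$, so that we sit exactly at the critical opening.

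For sufficiency, assume $\ga(\M)>1$, i.e.\ $1<\ga(\M)$, so that the opening $\pi$ of $\HH$ is strictly smaller than $\ga(\M)\pi$. I would then invoke (the construction behind) Thilliez's surjectivity theorem for the asymptotic Borel map on a sector of opening $<\ga(\M)\pi$ \cite{thilliez}: given $(c_p)\in\Lambda_{\M,h}$, one produces a preimage by a Borel--Ritt type series $f(z)=\sum_{p\ge0}\frac{c_p}{p!}\,z^p\,\chi_p(z)$, where each $\chi_p\in\mathcal{O}(\HH)$ equals $1$ to order $p$ at the boundary point $0$ and is damped at a scale $\delta_p$ calibrated to the associated function $\omega_{\M}$. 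Strong regularity of $\M$ ($\lc$, $\mg$, $\snq$) controls $\omega_{\M}$ and the derivatives of each summand, while the room provided by $\ga(\M)>1$ --- concretely property $(\ga_1)$ --- is exactly what forces $\sum_p$ to converge in the norm of $\mathcal{A}_{\M,H}(\HH)$ for a suitable $H\ge h$ and yields $f^{(p)}(0)=c_p$. (An equivalent route is to use the optimal flat function of the half-plane together with a truncated Laplace transform; either way this step is an explicit construction, not a soft argument.)

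For necessity I would argue by contraposition: suppose $\ga(\M)\le1$, equivalently that $(\ga_1)$ fails, and exhibit a sequence in $\Lambda_{\M}$ outside the range of $\mathcal{B}$. First, since $\mathcal{A}_{\M}(\HH)$ and $\Lambda_{\M}$ are countable inductive limits of Banach spaces, surjectivity of the continuous linear map $\mathcal{B}$ forces, by a Baire-category argument of open-mapping type, a quantitative right inverse: there exist $H\ge h$ and $C>0$ so that every $(c_p)$ with $\sup_p|c_p|/(h^pp!M_p)\le1$ equals $(f^{(p)}(0))_p$ for some $f\in\mathcal{A}_{\M,H}(\HH)$ with $\sup_{\HH}|f|\le C$. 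For such an $f$, two estimates compete along the ray $z=iy$, $y>0$ small: Cauchy's inequality on the maximal disc $D(iy,y)\subset\HH$ gives $|f^{(p)}(iy)|\le C\,p!\,y^{-p}$, whereas Taylor's formula along $[0,iy]$ together with the class bound $\sup_{[0,iy]}|f^{(n)}|\le C'H^nn!M_n$ gives $\bigl|f(iy)-\sum_{p<n}c_p(iy)^p/p!\bigr|\le C'H^nM_ny^n$. Choosing a critical test sequence --- built from $p!M_p$ with a correction that, precisely because $(\ga_1)$ fails, makes the partial sums $\sum_{p<n}c_p(iy)^p/p!$ too large to be reconciled with the Cauchy bound as $y\to0$ and $n\to\infty$ jointly --- produces the desired contradiction.

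The main obstacle is the boundary case $\ga(\M)=1$. Thilliez's classical obstruction disposes cleanly of $\ga(\M)<1$, but at $\ga(\M)=1$ one is forced to use the sharp equivalence $(\ga_1)\Leftrightarrow\ga(\M)>1$ and the fine estimates on $\omega_{\M}$ and on $(m_p)_p$ from \cite{JimenezSanzSchindlIndex} to pin down the non-realizable sequence; managing the simultaneous limits in $y$ and $n$ in the obstruction above, and matching them to the failure of $(\ga_1)$, is the delicate heart of the argument.
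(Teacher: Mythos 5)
First, a point of order: the paper does not prove this theorem at all --- it is imported verbatim from \cite[Thm.\ 4.17]{JG-S-S} --- so there is no internal argument to measure your proposal against; the question is only whether your sketch would stand on its own. The easy part is fine: the displayed condition is literally $(\ga_1)$, and fact (ii) of Subsection \ref{subsect-ultrahol} identifies it with $\ga(\M)>1$. Your sufficiency direction is also a legitimate reduction: $\HH$ is a sector of opening $\pi$, and when $\ga(\M)>1$ this opening is strictly below $\ga(\M)\pi$, so Thilliez's extension theorem for strongly regular sequences \cite{thilliez} (or the kernel/truncated-Laplace constructions it inspired) yields a right inverse for $\mathcal{B}$. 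This outsources the construction to the literature, but that is exactly what the present paper does too, so it is acceptable.

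The genuine gap is in the necessity direction. The Grothendieck/Baire step giving a quantitative bound ($\Lambda_{\M,h}\subseteq\mathcal{B}(\mathcal{A}_{\M,H}(\HH))$ with norm control) is standard, but everything after it is asserted rather than proved: the ``critical test sequence'' is never exhibited, and the two estimates you propose to play off against each other --- the Cauchy bound $|f^{(p)}(iy)|\le Cp!y^{-p}$ and the Taylor remainder bound $C'H^nM_ny^n$ --- only see the \emph{size} of $M_n$ (through $\om_{\M}$), whereas the condition $(\ga_1)$ you must contradict is a \emph{regularity} condition on the quotients, involving the tails $\sum_{q\ge p}1/m_q$. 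There is no indication of how a sequence $(c_p)$ with $|c_p|\le h^pp!M_p$ would make the partial sums $\sum_{p<n}c_p(iy)^p/p!$ ``too large'' precisely when those tails fail to be summable at the required rate, and you concede yourself that the boundary case $\ga(\M)=1$ --- which is the whole point of the sharp characterization --- is not handled. In \cite{JG-S-S} this necessity statement is Theorem 4.14 and rests on a substantial separate argument (passing to the boundary values in $\mathcal{E}^{\M}_\infty(\R)$ and invoking the Petzsche-type characterization of surjectivity of the Borel map in ultradifferentiable classes, where the condition $(\ga_1)$ for $\M$ arises as strong non-quasianalyticity of $\widehat{\M}$), not on a direct competition of the two pointwise estimates you describe. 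As written, the proposal therefore proves only one implication and gestures at the other.
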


The second index $\om(\M)$ is given by
$$\om(\M):= \displaystyle\liminf_{p\to\infty} \frac{\log(m_{p})}{\log(p)}\in[0,\infty],
$$
and it turns out that
\begin{align*}
\om(\M)&=\sup\{\mu>0\, | \, \sum^\infty_{p=0} \frac{1}{(m_p)^{1/\mu}}<\infty \}\\
&=\sup\{\mu>0 \, | \, \sum^\infty_{p=0} \frac{1}{((p+1)m_p)^{1/(\mu+1)}} < \infty \}.
\end{align*}

Concerning the injectivity of the asymptotic Borel mapping, we have the next result.
\begin{theorem} \emph{(\cite[Thm.\ 12]{Salinas}, \cite[Thm.\ 3.4]{JG-S-S})}\label{inj-borel} Let $\M$ be a weight sequence satisfying $\lc$. Then, $\mathcal{B}: \mathcal{A}_{\M}(\HH) \rightarrow \Lambda_{\M}$ is injective if and only if
$$
\sum_{p = 0}^\infty \frac{1}{((p+1)m_p)^{1/2}} = \infty,
$$
which in turn implies that $\om(\M)\le 1$.
\end{theorem}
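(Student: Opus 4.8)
The statement is a Watson-type quasi-analyticity result; the plan is to treat each implication after a common reduction. First I would reduce injectivity to triviality of the kernel: since $\mathcal{B}$ is linear, it is injective iff $\ker\mathcal{B}=\{0\}$, and $f\in\ker\mathcal{B}$ means $f\in\mathcal{A}_{\M,h}(\HH)$ for some $h>0$ with $f^{(p)}(0)=0$ for all $p\in\N$. By Lemma~\ref{extension}, $f$ and all its derivatives extend continuously to $\overline{\HH}$, so Taylor's formula with integral remainder along the segment $[0,z]\subset\overline{\HH}$, together with $|f^{(p)}|\le Ch^pp!M_p$, gives $|f(z)|\le C\inf_{p\in\N}M_p(h|z|)^p=C\exp(-\omega_{\M}(1/(h|z|)))$ for all $z\in\HH$. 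Thus $\ker\mathcal{B}$ consists exactly of the bounded holomorphic functions on $\HH$ that are flat at the origin at the rate prescribed by $\omega_{\M}$. I would also record that $(p+1)m_p=\widehat M_{p+1}/\widehat M_p$ with $\widehat M_p=p!M_p$, so the series in the statement is $\sum_p((p+1)m_p)^{-1/2}$, and that, by Komatsu-type estimates, it diverges precisely when $\int_1^\infty\omega_{\M}(r)^{1/2}r^{-3/2}\dr=\infty$ (just as $\int_1^\infty\omega_{\M}(r)r^{-2}\dr\asymp\sum_p1/m_p$).

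For \emph{sufficiency} ($\sum_p((p+1)m_p)^{-1/2}=\infty\Rightarrow$ injectivity), take $f\in\ker\mathcal{B}$ and aim at $f\equiv0$. Composing with $z\mapsto1/z$ and a rotation turns $f$ into a bounded holomorphic $g$ on the right half-plane with $|g(w)|\le C\exp(-\omega_{\M}(c|w|))$ as $|w|\to\infty$. Merely feeding this into the log-integrability theorem for bounded analytic functions ($\int_{\R}\log^-|g(it)|(1+t^2)^{-1}\dt<\infty$ unless $g\equiv0$) yields only the \emph{weaker} sufficient condition $\sum_p1/m_p=\infty$, by the relation recorded above. To reach the sharp exponent $1/2$ one must additionally exploit that $g$ extends to the closed half-plane with \emph{all} its derivatives bounded by $\M$ (inherited from $f$): combining a conformal ``opening'' of the angle with a refined Phragm\'en--Lindel\"of / harmonic-majorant argument reduces the vanishing of $f$ to a Carleman-type quasi-analyticity condition which, after the change of variables, reads $\int_1^\infty\omega_{\M}(r)^{1/2}r^{-3/2}\dr=\infty$, i.e.\ $\sum_p((p+1)m_p)^{-1/2}=\infty$ (equivalently, this is the case $\gamma=1$ of the sectorial characterization in \cite{JG-S-S}). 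Carrying both the flatness and the derivative bounds through the conformal map so that a genuine quasi-analytic class -- with exponent $1/2$ in place of $1$ -- emerges is, I expect, the main obstacle: a crude Phragm\'en--Lindel\"of argument provably misses the correct condition.

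For \emph{necessity} ($\sum_p((p+1)m_p)^{-1/2}<\infty\Rightarrow$ non-injectivity), one exhibits a nonzero $f\in\mathcal{A}_{\M}(\HH)$ flat at the origin; by the Taylor estimate above such an $f$ lies in $\ker\mathcal{B}$, so $\mathcal{B}$ is not injective. Such flat functions are the construction complementary to the Watson-type statement and exist precisely when the series converges: concretely, one builds $f$ as a moment/Laplace-type integral $f(z)=\int_0^\infty k(u/z)g(u)\du$ with a summability kernel $k$ adapted to the half-plane and a suitable nonzero $g$ in an associated Denjoy--Carleman class, convergence of $\sum_p((p+1)m_p)^{-1/2}$ being exactly what makes such $g$ (and $k$) available and secures the bounds $|f^{(p)}(z)|\le Ch^pp!M_p$ on $\HH$ and $|f(z)|=o(|z|^n)$ as $z\to0$ in $\HH$ for every $n$; this verification is the second substantial ingredient. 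Finally, the closing clause $\om(\M)\le1$ follows at once from the index formula $\om(\M)=\sup\{\mu>0:\sum_p((p+1)m_p)^{-1/(\mu+1)}<\infty\}$: were $\om(\M)>1$, picking $\mu\in(1,\om(\M))$ would give $\sum_p((p+1)m_p)^{-1/(\mu+1)}<\infty$, whence (since $(p+1)m_p\to\infty$ and $1/2>1/(\mu+1)$) also $\sum_p((p+1)m_p)^{-1/2}<\infty$, contrary to assumption.
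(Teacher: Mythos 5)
This theorem is not proved in the paper at all: it is imported from \cite[Thm.\ 12]{Salinas} and \cite[Thm.\ 3.4]{JG-S-S}, so the only ``proof'' the paper offers is the citation. Judged on its own terms, your proposal gets the architecture and the easy parts right: the reduction to $\ker\mathcal{B}=\{0\}$, the flatness estimate $|f(z)|\le Ce^{-\omega_{\M}(1/(h|z|))}$ via Taylor's formula and Lemma~\ref{extension}, the correct observation that feeding this into the logarithmic-integral theorem for bounded holomorphic functions on $\HH$ only yields the exponent-$1$ condition rather than the sharp exponent $1/2$, and the deduction of $\om(\M)\le 1$ from the sup-formula for $\om(\M)$, which is complete and correct.

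However, both substantive implications are left unproved, and you flag this yourself (``the main obstacle'', ``the second substantial ingredient''). For sufficiency, the entire content of the theorem is the sharp half-plane quasianalyticity statement: a bounded holomorphic $f$ on $\HH$ whose derivatives of all orders extend continuously to $\R$ with bounds $Ch^pp!M_p$ and vanish at $0$ must be identically zero as soon as $\sum_p((p+1)m_p)^{-1/2}=\infty$. Your text calls for ``a refined Phragm\'en--Lindel\"of / harmonic-majorant argument'' without specifying it; this is exactly the step where the exponent $1/2$ (the opening $\pi$ of the half-plane in the sectorial result of \cite{JG-S-S}, where the exponent is $1/(\gamma+1)$) has to be produced, and no mechanism for producing it is given. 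In addition, the recorded equivalence of $\sum_p((p+1)m_p)^{-1/2}=\infty$ with $\int_1^\infty\omega_{\M}(r)^{1/2}r^{-3/2}\,{\rm d}r=\infty$ is itself a nontrivial Komatsu-type computation --- the direct analogue of Komatsu's lemma for the relevant sequence $\widehat{\M}$ gives the condition $\int_1^\infty\omega_{\widehat{\M}}(r)r^{-3/2}\,{\rm d}r=\infty$, not a square root of $\omega_{\M}$ --- and would need justification. For necessity, one must actually exhibit a nonzero $f\in\mathcal{A}_{\M}(\HH)$, i.e.\ with \emph{uniform} bounds $|f^{(p)}(z)|\le Ch^pp!M_p$ on all of $\HH$, that is flat at $0$; the Laplace-kernel ansatz you mention is indeed the right family of constructions, but verifying the derivative bounds and the flatness from the convergence of $\sum_p((p+1)m_p)^{-1/2}$ is the whole work. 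As it stands, the proposal is a correct roadmap to the cited proofs rather than a proof.
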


Finally, we mention that if $\M$ is a weight sequence satisfying $\lc$, the asymptotic Borel mapping  $\mathcal{B}: \mathcal{A}_{\M}(\HH) \rightarrow \Lambda_{\M}$ is not bijective \cite[Thm.\ 3.17]{JG-S-S}.

\subsection{Gelfand-Shilov spaces}\label{GSspaces} Let $\M$ and $\A$ be weight sequences. For $h>0$ we
define $\mathcal{S}_{\M,h}^{\A,h}(\R)$ as the space consisting of all $\varphi \in C^\infty(\R)$ such that
$$
\sup_{p,q \in \N} \sup_{x \in \R} \frac{|x^p\varphi^{(q)}(x)|}{h^{p+q}p!M_pq!A_q} < \infty.
$$
Notice that $\varphi \in C^\infty(\R)$ belongs to  $\mathcal{S}_{\M,h}^{\A,h}(\R)$ if and only if
$$
\sup_{q \in \N} \sup_{x \in \R} \frac{|\varphi^{(q)}(x)|e^{\om_{\widehat{M}}(|x|/h)}}{h^{q}q!A_q} < \infty.
$$
We set $\SMA(\R) = \bigcup_{h>0} \mathcal{S}_{\M,h}^{\A,h}(\R)$. Analogously, we define  $\mathcal{S}_{\M,h}(\R)$, $h> 0$, as the space consisting of all $\varphi \in C^\infty(\R)$ such that, for all $q\in\N$,
$$
\sup_{p\in \N} \sup_{x \in \R} \frac{|x^p\varphi^{(q)}(x)|}{h^{p}p!M_p} < \infty
$$
and set $\mathcal{S}_{\M}(\R) = \bigcup_{h>0} \mathcal{S}_{\M,h}(\R)$. As in the introduction, we define
$$
\SMA(0,\infty) := \{ \varphi \in \SMA(\R) \, | \, \supp \varphi \subseteq [0,\infty)\}
$$
and
$$
\mathcal{S}_{\M}(0,\infty) := \{ \varphi \in \mathcal{S}_{\M}(\R) \, | \, \supp \varphi \subseteq [0,\infty)\}.
$$
Recall that $\SMA(0,\infty)\subset \mathcal{S}_{\M}(0,\infty)$. Suppose that $\A$ satisfies $\wlc$, then $\SMA(0,\infty)$ is non-trivial if and only if $\A$ satisfies $\nq$, as follows from the Denjoy-Carleman theorem.

In the remainder of this subsection we determine the image of the spaces $\SMA(\R)$ and $\SMA(0,\infty)$
under the Fourier transform (cf.\ \cite[Sect.\ IV.6]{G-S}). We fix the constants in the
Fourier transform as follows
$$
\mathcal{F}(\varphi)(\xi) = \widehat{\varphi}(\xi) = \int_{-\infty}^\infty \varphi(x) e^{ix\xi} \dx, \qquad \varphi \in L^1(\R).
$$
\begin{proposition}\label{Fourier-char} Let $\M$ and $\A$ be weight sequences satisfying $\wlc$ and $\dc$. Then, the Fourier transform is an isomorphism from $\SMA(\R)$ onto $\SAM(\R)$.
\end{proposition}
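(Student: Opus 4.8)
The plan is to prove continuity of $\mathcal{F}$ and of its inverse essentially in one stroke — the hypotheses on $\M$ and $\A$ being symmetric — and then to deduce bijectivity from the Fourier inversion formula.

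\emph{Step 1: continuity of $\mathcal{F}$ on the Banach steps.} Fix $h>0$ and let $\varphi\in\mathcal{S}_{\M,h}^{\A,h}(\R)$. Since $\varphi$ and all its derivatives decay faster than any power of $x$, one may differentiate $\widehat{\varphi}$ under the integral sign and integrate by parts $a$ times to obtain, for all $a,b\in\N$,
\[
\xi^a\widehat{\varphi}^{(b)}(\xi)=i^{a+b}\int_{-\infty}^\infty\frac{{\rm d}^a}{{\rm d}x^a}\big(x^b\varphi(x)\big)\,e^{ix\xi}\,\dx,
\]
so that $|\xi^a\widehat{\varphi}^{(b)}(\xi)|\le\big\|\frac{{\rm d}^a}{{\rm d}x^a}(x^b\varphi)\big\|_{L^1(\R)}$. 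Expanding the derivative by the Leibniz rule and estimating each resulting term via $\int|x^j\varphi^{(l)}(x)|\,\dx\le\pi\big(\|x^j\varphi^{(l)}\|_\infty+\|x^{j+2}\varphi^{(l)}\|_\infty\big)$, I would insert the defining bound of $\mathcal{S}_{\M,h}^{\A,h}(\R)$; the two extra powers of $x$ are then absorbed by replacing $(j+2)!M_{j+2}$ by $j!M_j$, which is legitimate because $\M$ is $\dc$, hence so is $\widehat{\M}$ (Remark~\ref{rem_propertiesM}). Reassembling the Leibniz sum, the binomial coefficients merge with the factorials and one is left with a bound of the form
\[
|\xi^a\widehat{\varphi}^{(b)}(\xi)|\le C\,a!\,b!\sum_{k=0}^{\min(a,b)}\frac{H^{a+b-2k}}{k!}\,M_{b-k}A_{a-k};
\]
using $M_{b-k}\le C_1M_b$ and $A_{a-k}\le C_2A_a$ (valid because $\M$ and $\A$ are weight sequences, hence each is eventually nondecreasing and bounded below by a positive constant) and summing the convergent series in $k$, one arrives at
\[
|\xi^a\widehat{\varphi}^{(b)}(\xi)|\le C'(h')^{a+b}\,(a!A_a)(b!M_b),\qquad a,b\in\N,\ \xi\in\R,
\]
with $C',h'>0$ depending only on $h,\M,\A$. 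This says exactly that $\widehat{\varphi}\in\mathcal{S}_{\A,h'}^{\M,h'}(\R)$, with seminorm controlled by that of $\varphi$; passing to the unions over $h$, the Fourier transform maps $\SMA(\R)$ continuously into $\SAM(\R)$.

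\emph{Step 2: the inverse and the isomorphism.} Interchanging the roles of $\M$ and $\A$ in Step 1 — both satisfy $\wlc$ and $\dc$ — shows likewise that $\mathcal{F}:\SAM(\R)\to\SMA(\R)$ is continuous. The reflection $R\psi:=\psi(-\,\cdot\,)$ leaves the defining seminorms of all the spaces involved unchanged, so it is a topological isomorphism of each of them, and the Fourier inversion formula gives $\frac1{2\pi}\mathcal{F}\big(R(\mathcal{F}\varphi)\big)=\varphi$ for every $\varphi\in\SMA(\R)$, and symmetrically on $\SAM(\R)$. Hence $\mathcal{G}:=\frac1{2\pi}\,\mathcal{F}\circ R:\SAM(\R)\to\SMA(\R)$ is a continuous two-sided inverse of $\mathcal{F}$, so $\mathcal{F}$ is a topological isomorphism of $\SMA(\R)$ onto $\SAM(\R)$.

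The real work is concentrated in Step 1: one must verify that, after the Leibniz reassembly, all the binomial and factorial factors genuinely collapse to the clean bound $(h')^{a+b}(a!A_a)(b!M_b)$, and in particular handle the two places where the hypotheses are actually used — the $+2$ shift (where $\dc$ enters) and the comparisons $M_{b-k}\le C_1M_b$, $A_{a-k}\le C_2A_a$ (where the weight-sequence property enters). Once these bookkeeping estimates close up, Step 2 is entirely soft.
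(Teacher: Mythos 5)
Your proposal is correct and follows essentially the same route as the paper's proof: express $\xi^a\widehat{\varphi}^{(b)}$ by differentiating under the integral and integrating by parts, expand by Leibniz, pay two extra powers of $x$ for the $L^1$ bound and absorb them via $\dc$ of $\widehat{\M}$, use the eventual monotonicity of weight sequences to compare $M_{b-k}$ with $M_b$, and reduce to a single inclusion via the inversion formula. The bookkeeping you flag does close up exactly as you describe (the paper carries it out with the equivalent device of inserting $(1+|x|)^{-2}$ into the integrand), so there is no gap.
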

\begin{proof}
Since the Fourier transform is an isomorphism on the Schwartz space $\mathcal{S}(\R)$ and $\mathcal{F}^{-1}(\varphi)(\xi) = (2\pi)^{-1} \mathcal{F}(\varphi) (- \xi)$
 for all $\varphi \in \mathcal{S}(\R)$, it suffices to show that $\mathcal{F}(\SMA(\R))  \subseteq \SAM(\R)$. Let $h \geq 1$ and $\varphi \in \mathcal{S}_{\M,h}^{\A,h}(\R)$ be arbitrary. Choose $C > 0$ such that
$$
\sup_{x \in \R} |x^p\varphi^{(q)}(x)| \leq Ch^{p+q}p!M_pq!A_q, \qquad p,q \in \N.
$$
Since $\M$ and $\A$ are weight sequences they are both increasing from some term on, and so there exists $D\ge 1$ such that $M_j\le DM_p$ and $A_j\le DA_p$ for all $j\le p$.
Hence,
\begin{align*}
\sup_{x \in \R} (1+|x|)^p|\varphi^{(q)}(x)| &\leq \sum_{j=0}^p \binom{p}{j} \sup_{x \in \R} |x^j\varphi^{(q)}(x)| \\
&\leq C \sum_{j=0}^p \binom{p}{j}h^{j+q}j!M_jq!A_q\\
&\leq CD(2h)^{p+q}p!M_pq!A_q
\end{align*}
for all $p,q \in \N$. Therefore,
\begin{align*}
\sup_{\xi \in \R} |\xi^q\widehat{\varphi}^{(p)}(\xi)| &\leq \sum_{j=0}^{\min\{p,q\}} \binom{q}{j} \frac{p!}{(p-j)!} \int_{-\infty}^\infty |x^{p-j} \varphi^{(q-j)}(x)| \dx \\
&\leq \sum_{j=0}^{\min\{p,q\}} \binom{q}{j}\binom{p}{j} j! \int_{-\infty}^\infty \frac{(1+|x|)^{p+2-j} |\varphi^{(q-j)}(x)|}{(1+|x|)^2} \dx \\
&\leq 8CDh^2 \sum_{j=0}^{\min\{p,q\}} \binom{q}{j}\binom{p}{j} j! (2h)^{p+q -2j}(p+2-j)!M_{p+2-j}(q-j)!A_{q-j} \\
&\leq 8Ch^2D^3(4h)^{p+q}(p+2)!M_{p+2}q!A_{q} \\
&\leq 32C_0^2H^3Ch^2D^3(8H^2h)^{p+q}p!M_{p}q!A_{q} \\
\end{align*}
for all $p,q \in \N$.
\end{proof}

In view of Proposition \ref{Fourier-char}, the next result can be shown in a similar way as \cite[Prop.\ 2.1]{C-C-K}.
\begin{proposition}\label{Fourier-char-supp} Let $\M$ and $\A$ be weight sequences satisfying $\wlc$ and $\dc$. Let $\psi: \R \rightarrow \C$. Then, $\psi \in \mathcal{F}(\SMA(0,\infty))$ if and only if $\psi \in \SAM(\R)$ and there is $\Psi: \overline{\HH} \rightarrow \C$ satisfying the following conditions:
\begin{itemize}
\item[$(i)$] $\Psi_{|\R} = \psi$.
\item[$(ii)$] $\Psi$ is continuous on $\overline{\HH}$ and analytic on $\HH$.
\item[$(iii)$] $\lim_{\zeta \in \overline{\HH}, \zeta \to \infty} \Psi(\zeta) = 0$.
\end{itemize}
\end{proposition}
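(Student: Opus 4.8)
The plan is to establish this Paley--Wiener type characterization by combining the Fourier isomorphism of Proposition~\ref{Fourier-char} with a classical contour-shift argument, following the scheme of \cite[Prop.\ 2.1]{C-C-K}.

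For the necessity part, I would start from $\psi = \widehat{\varphi}$ with $\varphi \in \SMA(0,\infty)$; then $\psi \in \SAM(\R)$ is immediate from Proposition~\ref{Fourier-char}. The natural candidate for $\Psi$ is
$$
\Psi(\zeta) = \int_0^\infty \varphi(x) e^{ix\zeta}\,\dx, \qquad \zeta \in \overline{\HH},
$$
which converges absolutely because $|e^{ix\zeta}| = e^{-x\operatorname{Im}\zeta} \leq 1$ for $x \geq 0$ and $\varphi$ is rapidly decreasing. Dominated convergence then yields continuity on $\overline{\HH}$ and $\Psi_{|\R} = \psi$, while differentiating under the integral sign on compact subsets of $\HH$ (where $\operatorname{Im}\zeta$ is bounded away from $0$) gives analyticity on $\HH$, settling $(i)$ and $(ii)$. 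For $(iii)$ I would use that $\varphi^{(k)}(0) = 0$ for every $k \in \N$ (as $\supp \varphi \subseteq [0,\infty)$ and $\varphi \in C^\infty(\R)$), so one integration by parts gives $\Psi(\zeta) = -(i\zeta)^{-1}\int_0^\infty \varphi'(x)e^{ix\zeta}\,\dx$, whence $|\Psi(\zeta)| \leq |\zeta|^{-1}\|\varphi'\|_{L^1} \to 0$ as $\zeta \to \infty$ in $\overline{\HH}$.

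For the sufficiency part, I would invoke Proposition~\ref{Fourier-char} again: since $\mathcal{F}: \SMA(\R) \to \SAM(\R)$ is an isomorphism, there is a unique $\varphi = \mathcal{F}^{-1}(\psi) \in \SMA(\R)$, and the only thing left to check is $\supp \varphi \subseteq [0,\infty)$, i.e.\ $\varphi(x) = 0$ for every $x < 0$. Fixing such an $x$ and writing $\varphi(x) = (2\pi)^{-1}\int_{-\infty}^\infty \Psi(\xi) e^{-ix\xi}\,\dxi$ by Fourier inversion (valid since $\psi \in \mathcal{S}(\R)$, and using $(i)$), I would apply Cauchy's theorem to the holomorphic function $\zeta \mapsto \Psi(\zeta) e^{-ix\zeta}$ on the boundary of the half-disc $\{\zeta \in \HH : |\zeta| < R\}$, so that the segment integral over $[-R,R]$ equals minus the integral over the upper semicircle $\zeta = R e^{i\theta}$, $\theta \in [0,\pi]$. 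On that semicircle $|e^{-ix\zeta}| = e^{xR\sin\theta} \leq 1$ for $x < 0$, and Jordan's lemma bounds $\int_0^\pi e^{xR\sin\theta} R \, {\rm d}\theta$ uniformly in $R$, so the semicircular contribution is dominated by a constant times $\sup_{|\zeta| = R}|\Psi(\zeta)|$, which tends to $0$ by $(iii)$. Letting $R \to \infty$ gives $\varphi(x) = 0$, hence $\varphi \in \SMA(0,\infty)$ and $\psi \in \mathcal{F}(\SMA(0,\infty))$.

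I expect the only genuinely delicate point to be the contour argument in the sufficiency direction: $\Psi$ is holomorphic only on the \emph{open} half-plane and merely continuous up to $\R$, so Cauchy's theorem cannot be applied verbatim on a contour touching the real axis. This is circumvented by first integrating over the boundary of $\{\zeta \in \HH : \operatorname{Im}\zeta > \varepsilon,\ |\zeta| < R\}$ and letting $\varepsilon \downarrow 0$, invoking the continuity of $\Psi$ on $\overline{\HH}$ from $(ii)$; one then has to observe that the decay hypothesis $(iii)$ is precisely what is needed, in combination with Jordan's lemma, to discard the large semicircle, since for $x < 0$ the exponential $e^{-ix\zeta}$ only stays bounded (by $1$) in $\overline{\HH}$ rather than decaying. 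Everything else is routine once Proposition~\ref{Fourier-char} is in hand.
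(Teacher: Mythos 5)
Your argument is correct, and it is essentially the proof the paper intends: the paper itself gives no details but defers to \cite[Prop.\ 2.1]{C-C-K}, which is exactly this combination of Proposition~\ref{Fourier-char} with the classical Paley--Wiener contour-shift argument (Laplace-type extension and integration by parts for necessity; Cauchy's theorem on half-discs, with the $\varepsilon$-lift to handle mere continuity up to $\R$, plus Jordan's lemma and hypothesis $(iii)$ for sufficiency). No gaps.
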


\subsection{The Laplace transform}\label{sect-Laplace} Let $\M$ be a weight sequence. We define $C_{\M,h}[0,\infty)$ as the space consisting of all $\varphi \in C([0,\infty))$ such that
$$
\sup_{p \in \N} \sup_{x \in [0,\infty)} \frac{x^p|\varphi(x)|}{h^{p}p!M_p} < \infty
$$
or, in other words, such that
$$
\sup_{x \in [0,\infty)} |\varphi(x)|e^{\om_{\widehat{M}}(|x|/h)} < \infty.
$$
We set $\CM = \bigcup_{h>0} C_{\M,h}[0,\infty)$. The \emph{Laplace transform} of $\varphi \in \CM$ is defined as
$$
\mathcal{L}(\varphi)(\zeta) =  \int_0^\infty \varphi(x)e^{ix\zeta}\dx, \qquad \zeta \in \overline{\HH}.
$$
\begin{remark}
Let $\M$ and $\A$ be weight sequences. We may view $\SMA(0,\infty)$ and $\mathcal{S}_{\M}(0,\infty)$ as subspaces of $\CM$. Notice that $\mathcal{L}(\varphi)_{|\R} = \widehat{\varphi}$ for all $\varphi \in \SMA(0,\infty)$.
\end{remark}
\begin{lemma}\label{inj-lapl}
Let $\M$ be a weight sequence satisfying $\dc$. Then, the mapping
$
\mathcal{L} : \CM \rightarrow \mathcal{A}_{\M}(\HH)
$
is well-defined and injective.
\end{lemma}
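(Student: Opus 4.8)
The plan is to establish the two assertions separately, beginning with well-definedness.

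Fix $\varphi\in C_{\M,h}[0,\infty)$ for some $h>0$, so that $|\varphi(x)|\le Ce^{-\om_{\widehat{M}}(x/h)}$ for all $x\ge 0$ and some $C>0$. Since $\om_{\widehat{M}}(t)\ge\log\big(t^{2}/(2M_{2})\big)$, this bound decays at least like $x^{-2}$, hence $\varphi\in L^{1}[0,\infty)$; because $|e^{ix\zeta}|=e^{-x\operatorname{Im}\zeta}\le 1$ on $\overline{\HH}$, the integral defining $\mathcal{L}(\varphi)(\zeta)$ converges absolutely and uniformly for $\zeta\in\overline{\HH}$. A routine dominated convergence / Morera argument (on a compact subset of $\HH$ one has $\operatorname{Im}\zeta\ge\eta_{0}>0$, so $x^{p}|\varphi(x)|e^{-x\eta_{0}}$ is an integrable majorant) then shows $\mathcal{L}(\varphi)\in\mathcal{O}(\HH)$ with
$$
\mathcal{L}(\varphi)^{(p)}(\zeta)=\int_{0}^{\infty}(ix)^{p}\varphi(x)e^{ix\zeta}\dx,\qquad \zeta\in\HH,\ p\in\N ,
$$
and consequently $|\mathcal{L}(\varphi)^{(p)}(\zeta)|\le C\,I_{p}$ for every $\zeta\in\HH$, where $I_{p}:=\int_{0}^{\infty}x^{p}e^{-\om_{\widehat{M}}(x/h)}\dx$.

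The core of the argument is the estimate of $I_{p}$. Splitting the integral at $x=h$ and using $\om_{\widehat{M}}\ge 0$ on $[0,h]$ together with $\om_{\widehat{M}}(x/h)\ge\log\big((x/h)^{p+2}/((p+2)!\,M_{p+2})\big)$ on $[h,\infty)$, one finds $I_{p}\le h^{p+1}\big(1+(p+2)!\,M_{p+2}\big)$. Here the hypothesis $\dc$ is precisely what is needed to absorb the two extra powers of $x$ that integrability forced upon us: it gives $M_{p+2}\le C_{0}^{2}H^{2p+3}M_{p}$, and combining this with the elementary bound $(p+2)!\le 2^{p+2}p!$ and the trivial estimate $1\le\delta^{-1}p!\,M_{p}$ (where $\delta:=\inf_{p}M_{p}>0$ since $\M$ is a weight sequence) yields $I_{p}\le C'h_{1}^{p}\,p!\,M_{p}$ for suitable $C',h_{1}>0$. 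Hence $\mathcal{L}(\varphi)\in\mathcal{A}_{\M,h_{1}}(\HH)\subseteq\mathcal{A}_{\M}(\HH)$, so $\mathcal{L}$ is well-defined.

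For injectivity, suppose $\mathcal{L}(\varphi)\equiv 0$ on $\HH$. By (the proof of) Lemma~\ref{extension}, or directly by dominated convergence in the defining integral, $\mathcal{L}(\varphi)$ extends continuously to $\overline{\HH}$ with $\mathcal{L}(\varphi)_{|\R}(\xi)=\int_{0}^{\infty}\varphi(x)e^{ix\xi}\dx$; thus this boundary value vanishes identically. In other words, the Fourier transform of the $L^{1}(\R)$ function equal to $\varphi$ on $[0,\infty)$ and to $0$ elsewhere is zero, so $\varphi=0$ a.e.\ on $[0,\infty)$ by injectivity of $\mathcal{F}$ on $L^{1}$, and $\varphi\equiv 0$ by continuity. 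The only point requiring genuine care is the estimate for $I_{p}$ and the role played there by $\dc$; the rest is standard Paley--Wiener bookkeeping.
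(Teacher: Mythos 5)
Your proof is correct and follows essentially the same route as the paper: the well-definedness is obtained by absorbing the two extra powers of $x$ needed for integrability via the condition $\dc$ (the paper delegates this to the computation in the proof of Proposition~\ref{Fourier-char}, which uses the same $(1+|x|)^{-2}$ device that your split of $I_p$ at $x=h$ encodes through the associated function), and the injectivity argument via the boundary value, extension by zero, and injectivity of $\mathcal{F}$ on $L^{1}(\R)$ is the paper's argument verbatim. All your estimates check out.
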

\begin{proof}
The fact that $\mathcal{L}$ is well-defined follows along the same lines as the proof of Proposition \ref{Fourier-char}. 
We now show that  $\mathcal{L}$ is injective. Let $\varphi \in \CM$ be
such that $\mathcal{L}(\varphi) \equiv 0$ on $\HH$. Since $\mathcal{L}(\varphi)$ is continuous on $\overline{\HH}$, we also have that $\mathcal{L}(\varphi) \equiv 0$ on $\R$.  Define
$$
\widetilde{\varphi}(x) =
\left\{
	\begin{array}{ll}
		\mbox{$\varphi(x)$},  &  x \geq 0, \\ \\
		\mbox{$0$},  &  x < 0.
	\end{array}
\right.
$$
Then, $\widetilde{\varphi} \in L^1(\R)$  and $\mathcal{F}(\widetilde{\varphi}) =  \mathcal{L}(\varphi)_{|\R} \equiv 0$. Since $\mathcal{F}$ is injective on $L^1(\R)$,  $\widetilde{\varphi} = 0$  almost everywhere. As $\varphi$ is continuous on $[0,\infty)$, we may conclude that $\varphi \equiv 0$ on $[0,\infty)$.
\end{proof}

\section{The Stieltjes moment problem in Gelfand-Shilov spaces}\label{sect-Stiel-mom-problem}
Let $\M$ be a weight sequence. The $p$-th moment, $p \in \N$, of an element $\varphi \in \CM$ is defined as
$$
\mu_p(\varphi) := \int_0^\infty x^p \varphi(x) \dx.
$$
If $\M$ satisfies $\dc$, then the \emph{Stieltjes moment mapping}
$$
\mathcal{M}: \CM \rightarrow \Lambda_{\M}: \varphi \mapsto (\mu_p(\varphi))_p
$$
is well-defined. The goal of this section is to characterize
the injectivity and surjectivity of the Stieltjes moment mapping on $\CM$ and its
subspaces of type $\SMA(0,\infty)$ and $\mathcal{S}_{\M}(0,\infty)$ in terms of the defining weight sequence $\M$. We employ the
same idea as in \cite{D-E}, which was later also used in \cite{C-C-K, L-S08,L-S09}. Namely, we shall reduce these
problems to their counterparts for the asymptotic Borel mapping (Theorems \ref{inj-borel} and
\ref{surj-borel}) via the Laplace transform. In this regard, the following formula is fundamental
$$
\mathcal{L}(\varphi)^{(p)}(0) = i^p \mu_p(\varphi), \qquad \varphi \in \CM, p \in \N.
$$
 In the next lemma we construct an auxiliary function that shall
be frequently used throughout this section (compare with the function $G$ from \cite{D-E}). We
set $\HH_{-1} = \{ z \in \C \, | \, \Im m \, z > -1 \}$.

\begin{lemma}\label{aux} Let $\A$ be a weight sequence satisfying $\wlc$ and $\nq$. Then, there is $G \in \mathcal{O}(\HH_{-1})$ satisfying the following conditions:
\begin{itemize}
\item[$(i)$] $G$ does not vanish on $\HH_{-1}$.
\item[$(ii)$] $\displaystyle \sup_{z \in \HH_{-1}} |G(z)|e^{\om_{\widehat{A}}(|z|)} < \infty$.
\item[$(iii)$] $\displaystyle \sup_{p \in \N}\sup_{x \in \R} \frac{|G^{(p)}(x)|e^{\om_{\widehat{A}}(|x|/2)}}{2^pp!} < \infty$.
\end{itemize}
\end{lemma}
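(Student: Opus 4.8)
The plan is to take $G$ to be a single infinite product whose poles all lie on the line $\Im z=-1-n_0$ — hence strictly below $\partial\HH_{-1}$ — and to read off (i)--(iii) from pointwise bounds on the factors. First note that \textbf{(iii) follows from (i), (ii) and holomorphy}: for $x\in\R$ the circle $\{|z-x|=1/2\}$ is contained in $\{|\Im z|\le 1/2\}\subset\HH_{-1}$, so Cauchy's integral formula gives $|G^{(p)}(x)|\le 2^p p!\sup_{|z-x|=1/2}|G(z)|$; as $|z|\ge |x|-1/2\ge |x|/2$ on that circle when $|x|\ge 1$ and $\om_{\widehat{\A}}$ is non-decreasing, (ii) yields $|G^{(p)}(x)|\le C\,2^p p!\,e^{-\om_{\widehat{\A}}(|x|/2)}$ for $|x|\ge 1$, while for $|x|<1$ the left side is at most $C\,2^p p!$; since $\om_{\widehat{\A}}$ is bounded on $[0,1/2]$, this gives (iii). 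So it suffices to produce $G$ holomorphic and zero-free on $\HH_{-1}$ satisfying the bound in (ii).

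Write $n_p:=\widehat A_{p+1}/\widehat A_p=(p+1)A_{p+1}/A_p$, so that $\widehat A_p=\prod_{j<p}n_j$; by the hypotheses $(n_p)_p$ is non-decreasing (by $\wlc$), tends to $\infty$ (as $\A$ is a weight sequence), and satisfies $\sum_p 1/n_p<\infty$ (by $\nq$). I would also first check that
\[
\text{there is }C_\om\ge 0\text{ with }\om_{\widehat{\A}}(t+1)\le\om_{\widehat{\A}}(t)+C_\om\text{ for all }t\ge 0 .
\]
Since $(n_p)$ is non-decreasing and tends to $\infty$, the sequence $p\mapsto(t+1)^p/\widehat A_p$ is eventually decreasing to $0$, so $e^{\om_{\widehat{\A}}(t+1)}=\sup_p(t+1)^p/\widehat A_p$ is attained, at $p^\ast=\#\{j:n_j<t+1\}$ (the ratio of consecutive terms being $(t+1)/n_p$). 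Using $(t+1)^p\le e^{p/t}t^p$ we get $\om_{\widehat{\A}}(t+1)\le\om_{\widehat{\A}}(t)+p^\ast/t$, and $p^\ast/t$ is bounded for $t\ge 1$ because $n_{p^\ast-1}=p^\ast A_{p^\ast}/A_{p^\ast-1}<t+1$ together with $A_{p+1}/A_p\to\infty$ forces it; the range $0\le t<1$ is trivial.

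Now set $\displaystyle G(z):=\prod_{j=0}^\infty\frac{n_j}{n_j+1-iz}$. Each factor is holomorphic and non-zero away from $z=-i(n_j+1)\in\{\Im z\le -1-n_0\}$, and on compact subsets of $\{\Im z>-1-n_0\}\supset\overline{\HH_{-1}}$ one has $|n_j+1-iz|\ge\tfrac12 n_j$ for large $j$, so $\sum_j|(1-iz)/(n_j+1-iz)|$ converges uniformly there; hence $G$ is holomorphic and zero-free on $\{\Im z>-1-n_0\}$, which gives (i). For (ii), write $z=x+iy$ with $y\ge -1$; then $\Re(n_j+1-iz)=n_j+1+y\ge n_j>0$ and
\[
|n_j+1-iz|^2=(n_j+1+y)^2+x^2\ge n_j^2+(1+y)^2+x^2=n_j^2+|z+i|^2\ge\max(n_j,|z+i|)^2 ,
\]
so each factor has modulus at most $\min(1,n_j/|z+i|)$; since $(n_j)$ is non-decreasing and $\widehat A_p=\prod_{j<p}n_j$,
\[
|G(z)|\le\prod_{j=0}^\infty\min\!\Big(1,\frac{n_j}{|z+i|}\Big)=\inf_{p\in\N}\frac{\widehat A_p}{|z+i|^p}=e^{-\om_{\widehat{\A}}(|z+i|)} .
\]
Since $|z+i|\ge|z|-1$, the estimate above yields $\om_{\widehat{\A}}(|z+i|)\ge\om_{\widehat{\A}}(|z|)-C_\om'$ for a constant $C_\om'$ (handling $|z|<1$ directly), so $|G(z)|\le e^{C_\om'}e^{-\om_{\widehat{\A}}(|z|)}$, which is (ii); then (iii) follows from the first paragraph.

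The main obstacle, and the point of the shift by $1$ in the denominators, is to reconcile two competing requirements: the poles of the product must stay strictly below $\partial\HH_{-1}$ — a pole on, or accumulating at, that line would make $|G(z)|\,e^{\om_{\widehat{\A}}(|z|)}$ unbounded near it — yet the product must still dominate $e^{\om_{\widehat{\A}}(|z|)}$ with no loss in the argument of $\om_{\widehat{\A}}$. The shift replaces $|z|$ by $|z+i|$, and absorbing this is precisely the role of the estimate $\om_{\widehat{\A}}(t+1)\le\om_{\widehat{\A}}(t)+O(1)$, which genuinely uses that $\A$ is a weight sequence.
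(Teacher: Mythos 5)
Your proof is correct, and it takes a genuinely different route from the paper's. The paper constructs $G$ as an outer-type function: it sets $\omega=\om_{\widehat{A}}(2\,\cdot)$, uses Komatsu's lemma to deduce $\int_0^\infty \omega(t)(1+t^2)^{-1}\dt<\infty$ from $\wlc$ and $\nq$, takes the Poisson transform $P_\omega$ together with the lower bound $P_\omega(z)\ge\tfrac14\omega(|z|)$ quoted from Braun--Meise--Taylor, forms $U=4P_\omega$ and its harmonic conjugate $V$, and puts $G=e^{-(U+iV)(\,\cdot\,+i)}$; the dilation by $2$ inside $\omega$ is what absorbs the shift by $i$, via $2|z+i|\ge |z|$ for $|z|\ge 2$. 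You instead write down the explicit product $\prod_j n_j/(n_j+1-iz)$ built from the quotients of $\widehat{\A}$ --- essentially the generalization of the Dur\'an--Estrada function $G$ that the paper alludes to just before the lemma --- and your factorwise estimate identifies $|G(z)|$ with $e^{-\om_{\widehat{A}}(|z+i|)}$, after which the shift is absorbed by your translation estimate $\om_{\widehat{A}}(t+1)\le \om_{\widehat{A}}(t)+O(1)$ (whose proof correctly exploits that $\A$ is a weight sequence; alternatively you could have used the halved quotients $n_j/2$ in the product and the inequality $2|z+i|\ge|z|$, exactly as the paper does, and skipped that estimate altogether). Both arguments derive $(iii)$ from $(ii)$ by the same Cauchy estimates on circles of radius $1/2$. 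What your approach buys is self-containedness and explicitness: no Poisson integral, no harmonic conjugate, no appeal to the two cited lemmas, only Weierstrass products and elementary inequalities on $(n_p)$. What the paper's approach buys is brevity on the page and independence from the quotient structure, since it only needs the logarithmic integrability of $\om_{\widehat{A}}$. I checked the delicate points of your argument --- monotonicity and summability of $(n_p)$, attainment of the supremum defining $\om_{\widehat{A}}(t+1)$ and the boundedness of $p^\ast/t$, the location of the poles strictly below $\Im z=-1$, the bound $|n_j+1-iz|\ge\max(n_j,|z+i|)$ for $\Im z\ge -1$, and the identification of the product with $\inf_p \widehat{A}_p\,|z+i|^{-p}$ --- and they are all sound.
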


The construction of the function $G$ from Lemma \ref{aux} is based on the following result.

\begin{lemma} \emph{(\cite[Lemma 2.2]{B-M-T})} \label{Poisson} Let $\omega: [0,\infty) \rightarrow [0,\infty)$ be an increasing continuous function such that
$$
\int_{0}^\infty \frac{\omega(t)}{1+t^2} \dt < \infty
$$
and extend $\omega$ as an even function to the whole real line.
Then, the Poisson transform of $\omega$ on $\HH$ given by
$$
P_\omega(z) = \frac{y}{\pi} \int_{-\infty}^\infty \frac{\omega(t)}{(t-x)^2 + y^2} \dt,  \qquad z = x+iy \in \HH,
$$
is harmonic and positive on $\HH$ and satisfies
$$
P_\omega(z) \geq \frac{1}{4} \omega(|z|), \qquad z \in \HH.
$$
\end{lemma}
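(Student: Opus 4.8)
The plan is to treat the three qualitative assertions (convergence, harmonicity, positivity) as standard facts about the Poisson kernel and to concentrate on the quantitative lower bound, which is the heart of the statement. Write $z = x+iy$ with $y>0$ and set $r = |z| = \sqrt{x^2+y^2}$. For convergence of the defining integral, I would observe that for fixed $z \in \HH$ the Poisson kernel satisfies $\frac{y}{(t-x)^2+y^2} \le C_z\,\frac{1}{1+t^2}$ for all $t \in \R$, where $C_z = \sup_{t \in \R}\frac{y(1+t^2)}{(t-x)^2+y^2} < \infty$ (the supremum is finite because the quotient is continuous, has strictly positive denominator $(t-x)^2+y^2 \ge y^2$, and tends to $y$ as $|t| \to \infty$). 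Hence $P_\omega(z) \le C_z \int_{-\infty}^\infty \frac{\omega(t)}{1+t^2}\dt < \infty$ by hypothesis, the continuity of $\omega$ guaranteeing local integrability of the integrand.

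For harmonicity and positivity, I would write the kernel as $\frac{y}{(t-x)^2+y^2} = \Im\frac{1}{t-z}$; for each fixed $t \in \R$ the map $z \mapsto 1/(t-z)$ is holomorphic on $\HH$ (since $z \ne t$ there), so its imaginary part is harmonic in $(x,y)$. Differentiating under the integral sign twice --- legitimate because on any compact subset of $\HH$ the partial derivatives of the kernel in $x$ and $y$ are dominated by a constant multiple of $\omega(t)/(1+t^2)$ --- yields $\Delta P_\omega \equiv 0$ on $\HH$. Positivity (indeed $P_\omega(z) > 0$ whenever $\omega \not\equiv 0$, and $P_\omega \equiv 0$ otherwise) is immediate from $y>0$ and $\omega \ge 0$.

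The lower bound is the main point. The idea is to discard the part of the integral over $\{|t| < r\}$ and exploit monotonicity on the complement: since $\omega$ is even and increasing on $[0,\infty)$, we have $\omega(t) = \omega(|t|) \ge \omega(r)$ whenever $|t| \ge r$. Therefore
$$
P_\omega(z) \ \ge\ \frac{y}{\pi}\int_{|t|\ge r}\frac{\omega(t)}{(t-x)^2+y^2}\dt \ \ge\ \omega(r)\cdot\frac{y}{\pi}\int_{|t|\ge r}\frac{\dt}{(t-x)^2+y^2}.
$$
It then remains to compute the harmonic measure of $\{|t| \ge r\}$ seen from $z$. Substituting $u = (t-x)/y$, and using that $\arctan a + \arctan b = \pi/2$ when $a,b>0$ and $ab=1$, together with the identity $\frac{(r-x)(r+x)}{y^2} = \frac{r^2-x^2}{y^2} = 1$ (note $r > |x|$ since $y>0$, so both factors are strictly positive), one finds
$$
\frac{y}{\pi}\int_{|t|\ge r}\frac{\dt}{(t-x)^2+y^2} = 1 - \frac{1}{\pi}\left(\arctan\frac{r-x}{y} + \arctan\frac{r+x}{y}\right) = 1 - \frac12 = \frac12.
$$
Geometrically this is Thales' theorem: the chord $[-r,r]$ is a diameter of the circle $|w| = r$ on which $z$ lies, so it subtends a right angle at $z$, whence $\{|t| \ge r\}$ carries harmonic measure exactly $\frac12$. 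Combining the two displays gives $P_\omega(z) \ge \frac12\,\omega(r) \ge \frac14\,\omega(|z|)$, which is in fact stronger than required.

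The only genuinely delicate point is the harmonic-measure computation, and even that reduces to the elementary arctangent identity above once one has the right idea --- namely to split the real line at $\pm|z|$ rather than symmetrically about the origin or about $x$. Everything else is routine Poisson-kernel bookkeeping, with the integrability hypothesis on $\omega$ used solely to license convergence and differentiation under the integral sign.
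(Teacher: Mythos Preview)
The paper does not supply its own proof of this lemma; it is quoted verbatim from \cite[Lemma 2.2]{B-M-T} and used as a black box in the construction of the auxiliary function $G$. There is therefore nothing to compare your argument against in the present paper.

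That said, your proof is correct. The convergence, harmonicity and positivity parts are standard and handled adequately. For the lower bound, your key computation is sound: with $r=|z|$ one has $(r-x)(r+x)=y^2$, so the two arctangents sum to $\pi/2$ and the harmonic measure of $\{|t|\ge r\}$ seen from $z$ is exactly $1/2$. This yields $P_\omega(z)\ge\tfrac12\,\omega(|z|)$, which is sharper than the constant $\tfrac14$ recorded in the statement (the weaker constant in \cite{B-M-T} presumably comes from a less precise estimate of the same integral). Your geometric remark via Thales' theorem is a pleasant way to see why the answer is exactly $1/2$: the point $z$ lies on the circle of diameter $[-r,r]$, so that chord subtends a right angle there.
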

\begin{proof}[Proof of Lemma \ref{aux}] Set $\omega = \om_{\widehat{A}}(2\,\cdot)$. Since $A$ satisfies $\wlc$ and $\nq$, we have that \cite[Lemma 4.1]{Komatsu}
$$
\int_{0}^\infty \frac{\omega(t)}{1+t^2} \dt < \infty.
$$
Write $U = 4P_\omega$ (cf.\ Lemma \ref{Poisson}) and let $V$ be the harmonic conjugate of $U$ on $\HH$. Define $G = e^{-(U( \, \cdot \, + i) + iV( \, \cdot \, + i))}$. It is clear that $G \in \mathcal{O}(\HH_{-1})$ and that $(i)$ is satisfied. We now show $(ii)$ and $(iii)$.

$(ii)$: For $z \in \HH_{-1}$ with $|z| \geq 2$ we have that $2|z+i| \geq |z|$ and, thus,
$$
|G(z)| = e^{-U(z+i)} \leq e^{-\om_{\widehat{A}}(2|z+i|)} \leq  e^{-\om_{\widehat{A}}(|z|)}.
$$
For $z \in \HH_{-1}$ with $|z| \leq 2$ we have that
$$
|G(z)| \leq e^{\om_{\widehat{A}}(2)} e^{-\om_{\widehat{A}}(|z|)}.
$$

$(iii)$: By the Cauchy estimates and $(ii)$ there is $C > 0$ such that
$$
|G^{(p)}(x)| \leq 2^p p! \max_{|z-x| \leq 1/2} |G(z)| \leq C 2^p p! \max_{|z-x| \leq 1/2} e^{-\om_{\widehat{A}}(|z|)}
$$
for all $x \in \R$ and $p \in \N$. For $x \in \R$ with $|x| \geq 1$ we have that $|z| \geq |x|/2$ for all $z \in \C$ with $|z-x| \leq 1/2$. Hence,
$$
|G^{(p)}(x)| \leq C 2^p p! e^{-\om_{\widehat{A}}(|x|/2)}, \qquad p \in \N.
$$
For $x \in \R$ with $|x| \leq 1$ we have that
$$
|G^{(p)}(x)| \leq Ce^{\om_{\widehat{A}}(1/2)} 2^p p! e^{-\om_{\widehat{A}}(|x|/2)}, \qquad p \in \N.
$$
\end{proof}
Proposition \ref{Fourier-char-supp} and Lemma \ref{aux} imply the following important lemma.
\begin{lemma} \label{multiplication} Let $\M$ be a weight sequence satisfying $\wlc$ and $\dc$
and let $\A$ be a weight sequence satisfying $\wlc$, $\dc$ and $\nq$.
Consider the function $G$ from Lemma \ref{aux}. Then, $fG \in \mathcal{F}(\SMA(0,\infty))$ for all $f \in \mathcal{A}_{\M}(\HH)$.
\end{lemma}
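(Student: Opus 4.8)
The plan is to apply the characterization of $\mathcal{F}(\SMA(0,\infty))$ in Proposition~\ref{Fourier-char-supp} to the function $\psi := (fG)_{|\R}$. Recall from Remark~\ref{extension-rem} that $f$ extends continuously to $\overline{\HH}$ and is analytic on $\HH$; since $\overline{\HH} \subset \HH_{-1}$, the product $\Psi := fG$ is then a well-defined function on $\overline{\HH}$ that is continuous there and analytic on $\HH$, with $\Psi_{|\R} = \psi$. So conditions $(i)$ and $(ii)$ of Proposition~\ref{Fourier-char-supp} come for free, and the task reduces to establishing $(iii)$ — namely $\Psi(\zeta) \to 0$ as $\zeta \to \infty$ in $\overline{\HH}$ — together with the membership $\psi \in \SAM(\R)$.

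For $(iii)$ I would use that $f$ is bounded on $\HH$ (indeed $|f| \le C_1 M_0 = C_1$ if $f \in \mathcal{A}_{\M,h_1}(\HH)$), hence bounded on $\overline{\HH}$ by continuity, while Lemma~\ref{aux}$(ii)$ gives $|G(z)| \lesssim e^{-\om_{\widehat{A}}(|z|)}$ on $\HH_{-1}$. Since $\om_{\widehat{A}}(t) \ge \log(t/A_1) \to \infty$, this forces $|\Psi(\zeta)| \lesssim e^{-\om_{\widehat{A}}(|\zeta|)} \to 0$ as $\zeta \to \infty$ in $\overline{\HH}$.

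The bulk of the work is the membership $\psi \in \SAM(\R)$, for which I would use the exponential form of the defining seminorms (with the roles of $\M$ and $\A$ interchanged relative to $\SMA$): one needs $\sup_{q,x} |\psi^{(q)}(x)|\, e^{\om_{\widehat{A}}(|x|/h)}/(h^q q! M_q) < \infty$ for some $h>0$. Applying the Leibniz rule to $\psi = (f_{|\R})(G_{|\R})$ and inserting the bounds $|f^{(j)}(x)| \le C_1 h_1^j j! M_j$ (from Lemma~\ref{extension}) and $|G^{(q-j)}(x)| \le C_2\, 2^{q-j} (q-j)!\, e^{-\om_{\widehat{A}}(|x|/2)}$ (from Lemma~\ref{aux}$(iii)$), the binomial coefficients collapse via $\binom{q}{j} j!(q-j)! = q!$, and after bounding $M_j \le D M_q$ for $j \le q$ (possible since a weight sequence is eventually increasing, as in the proof of Proposition~\ref{Fourier-char}) one is left with an estimate of the form $C\, q!\, M_q\, H_0^q\, e^{-\om_{\widehat{A}}(|x|/2)}$ for a suitable $H_0 \ge 2$. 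Choosing $h := H_0$ and using monotonicity of $\om_{\widehat{A}}$ to absorb $e^{-\om_{\widehat{A}}(|x|/2)}$ into $e^{\om_{\widehat{A}}(|x|/h)}$ finishes this step; Proposition~\ref{Fourier-char-supp} then yields $fG \in \mathcal{F}(\SMA(0,\infty))$.

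I do not expect a genuine obstacle: the factor-of-$2$ slack deliberately built into Lemma~\ref{aux}$(iii)$ exists precisely so that the Cauchy-type decay of the derivatives of $G$ can dominate the exponential weight $e^{\om_{\widehat{A}}(\cdot/h)}$ occurring in $\SAM(\R)$ for a suitable $h$. The only point demanding attention is the bookkeeping of which sequence plays the ``decay'' role and which the ``derivative-growth'' role after passing through the Fourier transform — this is what dictates that $\A$ (not $\M$) must be $\nq$, so that $G$ exists, whereas $\M$ only needs $\wlc$ and $\dc$ so that Proposition~\ref{Fourier-char-supp} applies.
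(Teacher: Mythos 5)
Your proposal is correct and follows exactly the route the paper intends: the paper gives no written proof beyond the remark that Proposition~\ref{Fourier-char-supp} and Lemma~\ref{aux} imply the lemma, and your argument is precisely the verification of the three conditions of Proposition~\ref{Fourier-char-supp} (using Lemma~\ref{extension} for the boundary values of $f$ and Lemma~\ref{aux}$(ii)$ for the decay at infinity) together with the Leibniz-rule estimate, via Lemma~\ref{aux}$(iii)$ and the exponential form of the seminorms, showing $(fG)_{|\R}\in\SAM(\R)$. The bookkeeping of the swapped roles of $\M$ and $\A$ after the Fourier transform is handled correctly, so there is nothing to add.
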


We are ready to study the injectivity and surjectivity of the Stieltjes moment mapping.

\begin{theorem}\label{inj-stiel} Let $\M$ be a weight sequence satisfying $\lc$ and $\dc$ and let $\A$ be a
weight sequence satisfying $\wlc$ and $\nq$. Then, the following statements are equivalent:
\begin{itemize}
\item[$(i)$] $\displaystyle \sum_{p = 0}^\infty \frac{1}{((p+1)m_p)^{1/2}} = \infty$.
\item[$(ii)$] $\mathcal{B}: \mathcal{A}_{\M}(\HH) \rightarrow \Lambda_{\M}$ is injective.
\item[$(iii)$] $\mathcal{M}: \CM \rightarrow \Lambda_{\M}$ is injective.
\item[$(iv)$] $\mathcal{M}: \mathcal{S}_{\M}(0,\infty) \rightarrow \Lambda_{\M}$ is injective.
\item[$(v)$] $\mathcal{M}: \SMA(0,\infty) \rightarrow \Lambda_{\M}$ is injective.
\end{itemize}
\end{theorem}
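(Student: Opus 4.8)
The plan is to establish the equivalence $(i)\Leftrightarrow(ii)$ together with the cycle $(ii)\Rightarrow(iii)\Rightarrow(iv)\Rightarrow(v)\Rightarrow(ii)$, which makes all five statements equivalent. The equivalence $(i)\Leftrightarrow(ii)$ is exactly Theorem \ref{inj-borel} applied to $\M$, which satisfies $\lc$ by hypothesis, so nothing is needed there.

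For $(ii)\Rightarrow(iii)$ I would use the factorization of the Stieltjes moment mapping through the Laplace transform. The identity $\mathcal{L}(\varphi)^{(p)}(0)=i^p\mu_p(\varphi)$ shows that $\mathcal{B}\circ\mathcal{L}=T\circ\mathcal{M}$ on $\CM$, where $T\colon\Lambda_{\M}\to\Lambda_{\M}$, $(c_p)_p\mapsto(i^pc_p)_p$, is a linear bijection. Since $\M$ satisfies $\dc$, Lemma \ref{inj-lapl} gives that $\mathcal{L}\colon\CM\to\mathcal{A}_{\M}(\HH)$ is injective, and $\mathcal{B}$ is injective by $(ii)$; hence $T\circ\mathcal{M}$, and therefore $\mathcal{M}$, is injective. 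The implications $(iii)\Rightarrow(iv)\Rightarrow(v)$ are immediate: one has the inclusions $\SMA(0,\infty)\subseteq\mathcal{S}_{\M}(0,\infty)\subseteq\CM$ (compatible with the respective moment mappings, cf.\ the remark preceding Lemma \ref{inj-lapl}), and injectivity is inherited by subspaces.

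The crux is $(v)\Rightarrow(ii)$, which I would prove by contraposition. Assume $\mathcal{B}\colon\mathcal{A}_{\M}(\HH)\to\Lambda_{\M}$ is not injective and fix $f\in\mathcal{A}_{\M}(\HH)$ with $f\not\equiv0$ but $f^{(p)}(0)=0$ for all $p\in\N$. Since $\A$ satisfies $\wlc$ and $\nq$, Lemma \ref{dc-below} supplies a weight sequence $\NN$ with $\NN\subset\A$ satisfying $\wlc$, $\dc$ and $\nq$; note that $\NN\subset\A$ forces $\mathcal{S}^{\NN}_{\M}(0,\infty)\subseteq\SMA(0,\infty)$. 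As $\M$ satisfies $\wlc$ (from $\lc$) and $\dc$, Lemma \ref{multiplication} applies to the pair $(\M,\NN)$: letting $G$ be the function associated to $\NN$ via Lemma \ref{aux}, we get $fG\in\mathcal{F}(\mathcal{S}^{\NN}_{\M}(0,\infty))$, so there is $\varphi\in\mathcal{S}^{\NN}_{\M}(0,\infty)\subseteq\SMA(0,\infty)$ with $\widehat{\varphi}=(fG)_{|\R}$. This $\varphi$ is nonzero, for $G$ is zero-free on $\HH_{-1}\supseteq\R$, so $(fG)_{|\R}\equiv0$ would give $f_{|\R}\equiv0$, whence $f\equiv0$ on $\HH$ by the Schwarz reflection principle (a function holomorphic on $\HH$, continuous on $\overline{\HH}$, vanishing on $\R$ is identically zero), contradicting $f\not\equiv0$. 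Finally, all moments of $\varphi$ vanish: since $f_{|\R}\in\mathcal{E}^{\M}_{\infty}(\R)$ has $(f_{|\R})^{(k)}(0)=f^{(k)}(0)=0$ for every $k$ (Remark \ref{extension-rem}) and $G_{|\R}$ is smooth, the Leibniz rule yields $i^p\mu_p(\varphi)=\widehat{\varphi}^{(p)}(0)=((fG)_{|\R})^{(p)}(0)=0$ for all $p\in\N$. Thus $\varphi$ is a nonzero element of $\SMA(0,\infty)$ in the kernel of $\mathcal{M}$, so $(v)$ fails.

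I expect the main obstacle to be the implication $(v)\Rightarrow(ii)$, and within it the bookkeeping required to invoke Lemma \ref{multiplication}: the given $\A$ need not satisfy $\dc$, so one must first descend to the auxiliary sequence $\NN$ from Lemma \ref{dc-below} and verify that this only shrinks the Gelfand-Shilov space. Once that reduction is in place, producing the nonzero $\varphi$ with vanishing moments is a matter of combining Lemma \ref{multiplication}, the non-vanishing of $G$, and the vanishing of $f$ together with all its derivatives at the origin; everything else in the proof is routine.
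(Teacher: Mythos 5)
Your proposal is correct and follows essentially the same route as the paper: the same reduction via Theorem \ref{inj-borel}, the factorization through the Laplace transform for $(ii)\Rightarrow(iii)$, and the combination of Lemmas \ref{dc-below}, \ref{aux} and \ref{multiplication} for the key implication. The only (immaterial) difference is that you argue $(v)\Rightarrow(ii)$ by contraposition, producing a nonzero $\varphi$ with vanishing moments, whereas the paper runs the same construction directly, concluding $\varphi\equiv 0$ from $(v)$ and then $f\equiv 0$ since $G$ is zero-free.
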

\begin{proof}
$(i) \Rightarrow (ii)$: By Theorem \ref{inj-borel}.

$(ii) \Rightarrow (iii)$: Let $\varphi \in \CM$ be such that $\mu_p(\varphi) = 0$ for all $p \in \N$. By Lemma \ref{inj-lapl} we have that $\mathcal{L}(\varphi) \in \mathcal{A}_{\M}(\HH)$.  Moreover,
$
\mathcal{L}(\varphi)^{(p)}(0) = i^p \mu_p(\varphi) = 0$ for all $p \in \N$ and, thus,
$\mathcal{L}(\varphi) \equiv 0$. Since $\mathcal{L}$ is injective (Lemma \ref{inj-lapl}), we obtain that $\varphi \equiv 0$.

$(iii) \Rightarrow (iv) \Rightarrow (v)$: Obvious.

$(v) \Rightarrow (i)$: By Lemma \ref{dc-below} we may assume that $\A$ satisfies $\dc$. In view of Theorem \ref{inj-borel} it suffices to show that $\mathcal{B}: \mathcal{A}_{\M}(\HH) \rightarrow \Lambda_{\M}$ is injective. Let $f \in \mathcal{A}_{\M}(\HH)$ be such that $f^{(p)}(0) = 0$ for all $p \in \N$. Consider the function $G$ from Lemma \ref{aux}. By Lemma \ref{multiplication} we have that $fG = \widehat{\varphi}$ for some $\varphi \in \SMA(0,\infty)$. Observe that
$$
\mu_p(\varphi) = (-i)^p \widehat{\varphi}^{(p)}(0) = (-i)^p(fG)^{(p)}(0) = (-i)^p\sum_{j=0}^p \binom{p}{j} f^{(j)}(0) G^{(p-j)}(0) = 0, \quad p \in \N.
$$
Hence, $\varphi \equiv 0$ and, thus, $fG \equiv 0$. Since $G$ does not vanish (Lemma \ref{aux}$(i)$), we obtain that $f \equiv 0$.
\end{proof}

\begin{theorem} \label{surj-stiel-modified} Let $\M$ be a weight sequence satisfying $\lc$ and $\dc$ and let $\A$ be a weight sequence satisfying $\wlc$ and $\nq$. Then, the following statements are equivalent:
\begin{itemize}
\item[$(i)$] $\mathcal{M}: \SMA(0,\infty) \rightarrow \Lambda_{\M}$ is surjective.
\item[$(ii)$] $\mathcal{M}: \mathcal{S}_{\M}(0,\infty) \rightarrow \Lambda_{\M}$ is surjective.
\item[$(iii)$] $\mathcal{M}: \CM \rightarrow \Lambda_{\M}$ is surjective.
\item[$(iv)$] $\mathcal{B}: \mathcal{A}_{\M}(\HH) \rightarrow \Lambda_{\M}$ is surjective.
\end{itemize}
Each of the previous statements implies the next one:
\begin{itemize}
\item[$(v)$] $\displaystyle \sup_{p \in \N} \frac{m_p}{p+1}\sum_{q = p}^\infty \frac{1}{m_q} < \infty$ or, equivalently, $\gamma(\M)>1$.
\end{itemize}
If, in addition, $\M$ satisfies $\mg$, then all the previous statements are equivalent.
\end{theorem}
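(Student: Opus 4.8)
The plan is to establish the circle of implications $(i)\Rightarrow(ii)\Rightarrow(iii)\Rightarrow(iv)\Rightarrow(i)$, then $(iv)\Rightarrow(v)$, and finally, under the additional hypothesis $\mg$, the implication $(v)\Rightarrow(iv)$ that closes the equivalence. Throughout, the mechanism is the one already used for Theorem~\ref{inj-stiel}: transport everything to the asymptotic Borel mapping via the Laplace transform, using the non-vanishing multiplier $G$ of Lemma~\ref{aux} to land inside $\SMA(0,\infty)$. Recall also that, by the identifications summarized in Section~\ref{sect-prelim}, the supremum in $(v)$ is finite if and only if $\M$ satisfies $(\gamma_1)$, i.e.\ if and only if $\gamma(\M)>1$.

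First, $(i)\Rightarrow(ii)\Rightarrow(iii)$ are immediate from the inclusions $\SMA(0,\infty)\subseteq\mathcal{S}_{\M}(0,\infty)\subseteq\CM$. For $(iii)\Rightarrow(iv)$: given $(c_p)_p\in\Lambda_{\M}$, the sequence $((-i)^pc_p)_p$ also lies in $\Lambda_{\M}$, so surjectivity of $\mathcal{M}$ on $\CM$ yields $\varphi\in\CM$ with $\mu_p(\varphi)=(-i)^pc_p$; then $f:=\mathcal{L}(\varphi)\in\mathcal{A}_{\M}(\HH)$ by Lemma~\ref{inj-lapl}, and $f^{(p)}(0)=i^p\mu_p(\varphi)=c_p$ for all $p$, so $\mathcal{B}$ is surjective.

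The substantial step is $(iv)\Rightarrow(i)$. As in Theorem~\ref{inj-stiel}, Lemma~\ref{dc-below} lets us replace $\A$ by a smaller weight sequence $\NN\subset\A$ satisfying $\wlc$, $\dc$, $\nq$, so that $\mathcal{S}^{\NN}_{\M}(0,\infty)\subseteq\SMA(0,\infty)$ and Lemma~\ref{multiplication} applies to $(\M,\NN)$; let $G$ be the corresponding function from Lemma~\ref{aux}. Given $(c_p)_p\in\Lambda_{\M}$, it suffices to produce $f\in\mathcal{A}_{\M}(\HH)$ with $(fG)^{(p)}(0)=i^pc_p$ for all $p$: then Lemma~\ref{multiplication} gives $fG=\widehat{\varphi}$ for some $\varphi\in\mathcal{S}^{\NN}_{\M}(0,\infty)\subseteq\SMA(0,\infty)$, and the Leibniz computation already carried out in the proof of Theorem~\ref{inj-stiel} yields $\mu_p(\varphi)=(-i)^p(fG)^{(p)}(0)=c_p$. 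Since $G\in\mathcal{O}(\HH_{-1})$ does not vanish and $0\in\HH_{-1}$, the reciprocal $1/G$ is analytic near the origin; writing $1/G(z)=\sum_p h_pz^p/p!$ gives $|h_p|\le C_1R^pp!$ for suitable $C_1,R>0$. By Leibniz at the boundary point $0$, the condition $(fG)^{(p)}(0)=i^pc_p$ amounts to the lower-triangular system $\sum_{j=0}^p\binom{p}{j}f^{(j)}(0)G^{(p-j)}(0)=i^pc_p$, whose unique solution is $b_p:=f^{(p)}(0)=\sum_{j=0}^p\binom{p}{j}(i^jc_j)h_{p-j}$ --- this is just formal power series division of $\sum_p i^pc_pz^p/p!$ by the Taylor series of $G$. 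It then remains to check $(b_p)_p\in\Lambda_{\M}$: from $|c_j|\le C_2L^jj!M_j$, the log-convexity consequence $M_jM_{p-j}\le M_p$ (see \eqref{eq-conseq-lc}), and the bound on $h_{p-j}$, one obtains $|b_p|\le C_3\,p!\,M_p\,L^p\sum_{k\ge0}(R/L)^k/M_k$, and the last series converges because $m_k\to\infty$ forces $M_k$ to eventually dominate every geometric sequence. Hence $(b_p)_p\in\Lambda_{\M}$, and $(iv)$ supplies the required $f\in\mathcal{A}_{\M}(\HH)$ with $f^{(p)}(0)=b_p$.

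It remains to treat $(iv)\Rightarrow(v)$ and, under $\mg$, $(v)\Rightarrow(iv)$. The former is the necessity part for the asymptotic Borel mapping: for a log-convex weight sequence, surjectivity of $\mathcal{B}:\mathcal{A}_{\M}(\HH)\to\Lambda_{\M}$ forces $\gamma(\M)>1$, which is the content of the relevant statement in \cite{JG-S-S} together with the identifications recalled in Section~\ref{sect-prelim}. For the latter: if $\M$ satisfies $\mg$ and $(v)$ holds, then $\gamma(\M)>1>0$, so $\M$ satisfies $\snq$ and is therefore strongly regular, whence Theorem~\ref{surj-borel} gives $(iv)$ and closes the circle. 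The main obstacle is the estimate inside $(iv)\Rightarrow(i)$ --- controlling the coefficients of the quotient power series $\big(\sum_p i^pc_pz^p/p!\big)/G(z)$ in terms of $\M$ --- which is precisely where log-convexity of $\M$ and the super-geometric growth $m_p\to\infty$ enter; a minor secondary point is to quote the necessity statement for $(iv)\Rightarrow(v)$ from \cite{JG-S-S} in the generality (log-convex weight sequences) needed here.
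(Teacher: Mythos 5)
Your proposal is correct and takes essentially the same route as the paper: the same cycle of implications, the same reduction to the asymptotic Borel mapping via the Laplace transform and the non-vanishing multiplier $G$ of Lemma~\ref{aux} (after shrinking $\A$ with Lemma~\ref{dc-below}), and the same citations for the necessity of $\gamma(\M)>1$ and for Theorem~\ref{surj-borel} under $\mg$. The only cosmetic differences are that you justify the inversion formula by formal power-series division rather than via Lemma~\ref{inversion}, and that you bound $(b_p)_p$ using \eqref{eq-conseq-lc} together with the convergence of $\sum_{k}(R/L)^k/M_k$ instead of the paper's near-monotonicity estimate $M_j\le DM_p$; both variants are valid.
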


In the proof of Theorem \ref{surj-stiel-modified} we shall use the following lemma (cf.\ \cite{D-E}).
\begin{lemma}\label{inversion}
Let $(c_p)_p \in \C^\N$ and let $G \in C^\infty((-\delta,\delta))$, for some $\delta > 0$, such that $G(0) \neq 0$. Set
$$
b_p = \sum_{j = 0}^p \binom{p}{j} c_j \left ( \frac{1}{G} \right)^{(p-j)} (0), \qquad p \in \N.
$$
Then,
$$
\sum_{j=0}^p \binom{p}{j} b_j G^{(p-j)}(0) = c_p, \qquad p\in \N.
$$
\end{lemma}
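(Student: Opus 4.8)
The plan is to recognize both binomial convolutions appearing in the statement as instances of the Leibniz rule, so that their composition collapses via the elementary identity $G \cdot (1/G) = 1$. Concretely, I would substitute the definition of $b_j$ directly into the target sum, obtaining a double sum over indices $j$ and $k$ whose summand is $\binom{p}{j}\binom{j}{k} c_k (1/G)^{(j-k)}(0)\, G^{(p-j)}(0)$. All the derivatives involved exist on a neighbourhood of the origin, since $G \in C^\infty((-\delta,\delta))$ and $G(0) \neq 0$ forces $1/G$ to be $C^\infty$ near $0$ as well; the statement is thus purely formal and makes no use of the weight-sequence machinery.

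Next I would interchange the order of summation so that $k$ runs from $0$ to $p$ on the outside and $j$ from $k$ to $p$ on the inside, factoring out the constant $c_k$. The key algebraic step is then the subset-of-a-subset identity $\binom{p}{j}\binom{j}{k} = \binom{p}{k}\binom{p-k}{j-k}$, after which the substitution $\ell = j-k$ turns the inner sum into
$$
\sum_{\ell=0}^{p-k} \binom{p-k}{\ell} (1/G)^{(\ell)}(0)\, G^{(p-k-\ell)}(0).
$$

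This inner sum is exactly the Leibniz expansion, evaluated at the origin, of the $(p-k)$-th derivative of the product $(1/G)\cdot G$, which is the constant function $1$. Hence it equals $1$ when $k = p$ and vanishes otherwise, so only the top term of the outer sum survives and the whole expression reduces to $\binom{p}{p} c_p = c_p$, as claimed.

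I do not anticipate a genuine obstacle here: the only points requiring a moment's care are the legitimacy of swapping the two finite sums and the binomial identity, both of which are elementary. As an alternative route that makes the mechanism transparent, one may pass to exponential generating functions, where the two binomial convolutions become the products $\hat b = \hat c \cdot \widehat{(1/G)}$ and $\hat b \cdot \hat g = \hat c$, with $\hat g$ the Taylor series of $G$ at $0$; the identity is then immediate from $\widehat{(1/G)} \cdot \hat g = 1$, which is simply the formal Taylor expansion of $G \cdot (1/G) = 1$.
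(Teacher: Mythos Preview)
Your argument is correct. Both your proof and the paper's hinge on the Leibniz rule together with the identity $G\cdot(1/G)=1$, but the routes differ: the paper invokes E.~Borel's theorem to produce a smooth function $f$ on a small interval with $f^{(p)}(0)=c_p$, sets $g=f/G$, observes $g^{(p)}(0)=b_p$ by Leibniz, and then reads off the conclusion from $f=gG$. You instead substitute the definition of $b_j$ directly, use the identity $\binom{p}{j}\binom{j}{k}=\binom{p}{k}\binom{p-k}{j-k}$, and recognize the inner sum as $\bigl((1/G)\cdot G\bigr)^{(p-k)}(0)=\delta_{p,k}$. Your approach is more elementary in that it avoids Borel's theorem entirely and works purely at the level of sequences; the paper's approach is shorter to write and makes the mechanism $(f/G)\cdot G=f$ transparent without any index juggling. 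Your generating-function remark is exactly the formal-power-series version of the paper's argument, with Borel's theorem replaced by the trivial fact that every sequence is the coefficient sequence of some formal power series.
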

\begin{proof}
Choose $0 < \delta_1 \leq \delta$ such that $G$ does not vanish on $(-\delta_1,\delta_1)$. By E.\ Borel's theorem there is $f \in C^\infty((-\delta_1, \delta_1))$ such that $f^{(p)}(0) = c_p$ for all $p \in \N$. Set $g = f/G \in C^\infty((-\delta_1,\delta_1))$. Then,
$$
g^{(p)}(0) = \sum_{j = 0}^p \binom{p}{j} f^{(j)}(0) \left ( \frac{1}{G} \right)^{(p-j)} (0) = b_p, \qquad p \in \N.
$$
Hence,
$$
c_p = f^{(p)}(0) = (gG)^{(p)}(0) =  \sum_{j = 0}^p \binom{p}{j} g^{(j)}(0) G^{(p-j)} (0) =  \sum_{j = 0}^p \binom{p}{j} b_j G^{(p-j)} (0), \qquad p \in \N.
$$
\end{proof}

\begin{proof}[Proof of Theorem \ref{surj-stiel-modified}]
We first prove the equivalence of the statements $(i)$ to $(iv)$.

$(i) \Rightarrow (ii) \Rightarrow (iii)$: Obvious.

$(iii) \Rightarrow (iv)$: Let $(c_p)_{p} \in \Lambda_{\M}$ be arbitrary. Pick $\varphi \in \CM$ such that $\mu_p(\varphi) = (-i)^pc_p$ for all $p \in \N$. Then, $f = \mathcal{L}(\varphi)  \in \mathcal{A}_{\M}(\HH)$ (Lemma \ref{inj-lapl}) and $f^{(p)}(0) = i^p \mu_p(\varphi) = c_p$ for all $p \in \N$.

$(iv) \Rightarrow (i)$: By Lemma \ref{dc-below} we may assume that $\A$ satisfies $\dc$.  Let $(c_p)_{p} \in \Lambda_{\M}$ be arbitrary.  Consider the function $G$ from Lemma \ref{aux}. Set
$$
b_p = \sum_{j = 0}^p \binom{p}{j} i^jc_j \left ( \frac{1}{G} \right)^{(p-j)} (0), \qquad p \in \N.
$$
We claim that $(b_p)_{p} \in \Lambda_{\M}$ (cf.\ \cite[Prop.\ 6.4]{L-S09}). Indeed, choose $C,h > 0$ such that $|c_p| \leq C h^p p! M_p$ for all $p \in \N$. Next, since $1/G$ is holomorphic on a neighbourhood of the disk with center the origin and radius 1/2, there is $C' > 0$ such that $ |(1/G)^{(p)}(0)| \leq C' 2^p p!$ for all $p \in \N$. Hence,
$$
|b_p| \le C C'\sum_{j = 0}^p \binom{p}{j} h^j j!M_j 2^{p-j} (p-j)! \leq  CC' D(h +2)^p p!M_p, \qquad p \in \N,
$$
where $D\ge 1$ is chosen so that $M_j\le DM_p$ for all $j\le p$.
By assumption there is $f \in  \mathcal{A}_{\M}(\HH)$ such that $f^{(p)}(0) = b_p$ for all $p \in \N$. We have that $fG = \widehat{\varphi}$ for some $\varphi \in \SMA(0,\infty)$ by  Lemma \ref{multiplication}. Finally, Lemma \ref{inversion} implies that
$$
\mu_p(\varphi) = (-i)^p \widehat{\varphi}^{(p)}(0) = (-i)^p(fG)^{(p)}(0) = (-i)^p\sum_{j=0}^p \binom{p}{j} b_j G^{(p-j)}(0) = c_p, \qquad p \in \N.
$$

We  now prove the statements related to $(v)$. The implication $(iv) \Rightarrow (v)$  follows directly from \cite[Thm.\ 4.14$(i)$]{JG-S-S}.
If, in addition,  $\M$ satisfies $\mg$, condition $(v)$ implies that $\M$ satisfies $\snq$ as well (see Subsection~\ref{subsect-ultrahol}), and so $\M$ is strongly regular. Then, Theorem \ref{surj-borel} guarantees that $(iv)$ holds.
\end{proof}

\begin{corollary} \label{cor-bij} Let $\M$ be a weight sequence satisfying $\lc$ and $\dc$ and let $\A$ be a weight sequence satisfying $\wlc$ and $\nq$. Then, $\mathcal{M}: \CM \rightarrow \Lambda_{\M}$, $\mathcal{M}: \mathcal{S}_{\M}(0,\infty) \rightarrow \Lambda_{\M}$ and $\mathcal{M}: \SMA(0,\infty) \rightarrow \Lambda_{\M}$ are never bijective.
\end{corollary}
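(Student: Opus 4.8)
The plan is to derive Corollary \ref{cor-bij} as an immediate consequence of the two main theorems of the section, Theorems \ref{inj-stiel} and \ref{surj-stiel-modified}, together with the non-bijectivity of the asymptotic Borel mapping recalled at the end of Subsection \ref{subsect-ultrahol}. Let $X$ denote any one of the three spaces $\CM$, $\mathcal{S}_{\M}(0,\infty)$, $\SMA(0,\infty)$, and assume toward a contradiction that $\mathcal{M}: X \to \Lambda_{\M}$ is bijective.

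First, since $\mathcal{M}$ is then injective on $X$, Theorem \ref{inj-stiel} (the equivalence of statements $(ii)$--$(v)$ there) yields that $\mathcal{B}: \mathcal{A}_{\M}(\HH) \to \Lambda_{\M}$ is injective. Second, since $\mathcal{M}$ is surjective on $X$, Theorem \ref{surj-stiel-modified} (the equivalence of statements $(i)$--$(iv)$ there, which holds under the standing hypotheses without assuming $\mg$) gives that $\mathcal{B}$ is surjective. Hence $\mathcal{B}: \mathcal{A}_{\M}(\HH) \to \Lambda_{\M}$ is bijective, contradicting \cite[Thm.\ 3.17]{JG-S-S} (as recalled in Subsection \ref{subsect-ultrahol}), since $\M$ is a weight sequence satisfying $\lc$. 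This proves the corollary.

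Since the argument is so short, there is essentially no obstacle; the only points to check carefully are bookkeeping matters: that the hypotheses of the corollary ($\lc$, $\dc$ for $\M$ and $\wlc$, $\nq$ for $\A$) are exactly those under which Theorems \ref{inj-stiel} and \ref{surj-stiel-modified} apply, and that the relevant equivalences in Theorem \ref{surj-stiel-modified} are the ones among $(i)$--$(iv)$ (so that $\mg$ is not needed). As an alternative to invoking \cite[Thm.\ 3.17]{JG-S-S}, one can also reach the contradiction through the growth indices: injectivity of $\mathcal{M}$ forces condition $(i)$ of Theorem \ref{inj-stiel}, hence $\om(\M) \le 1$ by Theorem \ref{inj-borel}, while surjectivity forces $\ga(\M) > 1$ by Theorem \ref{surj-stiel-modified}$(v)$; these are incompatible because $\ga(\M) \le \om(\M)$ for every $\lc$ weight sequence (if $(m_p/(p+1)^\mu)_p$ is almost increasing then $m_p \ge c(p+1)^\mu$ for some $c > 0$, whence $\liminf_{p \to \infty} \log m_p / \log p \ge \mu$).
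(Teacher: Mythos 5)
Your proposal is correct, and in fact you give two valid arguments. Your primary route (injectivity of $\mathcal{M}$ forces injectivity of $\mathcal{B}$ via Theorem \ref{inj-stiel}, surjectivity forces surjectivity of $\mathcal{B}$ via the equivalences $(i)$--$(iv)$ of Theorem \ref{surj-stiel-modified}, and then one invokes the non-bijectivity of $\mathcal{B}$ from \cite[Thm.\ 3.17]{JG-S-S}) is a slight variant of the paper's proof: it outsources the final contradiction to the cited black-box result on the Borel mapping. The paper instead argues through the growth indices, exactly as in your stated alternative: injectivity gives $\sum_p ((p+1)m_p)^{-1/2}=\infty$, hence $\om(\M)\le 1$, hence $\ga(\M)\le 1$ since $\ga(\NN)\le\om(\NN)$ for any $\lc$ weight sequence, which violates the necessary condition $(v)$ of Theorem \ref{surj-stiel-modified} for surjectivity. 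Both routes are legitimate under the stated hypotheses (which you correctly checked match those of Theorems \ref{inj-stiel} and \ref{surj-stiel-modified}, with no need for $\mg$); the index route has the small advantage of being self-contained modulo facts already recalled in Subsection \ref{subsect-ultrahol}, and your verification of $\ga(\M)\le\om(\M)$ from the almost-increasing definition is accurate.
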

\begin{proof}
If any of the moment mappings were injective, we would have that $\sum_p ((p+1)m_p)^{-1/2}=\infty$ by Theorem~\ref{inj-stiel}. From Subsection~\ref{subsect-ultrahol} we deduce that $\omega(\M)\le 1$, which in turn implies that $\gamma(\M)\le 1$ because $\ga(\NN)\leq\om(\NN)$ for any weight sequence $\NN$ satisfying $\lc$. Hence,  $(v)$ from Theorem~\ref{surj-stiel-modified} is violated and therefore none of $(i)-(iii)$ from  Theorem~\ref{surj-stiel-modified} can be satisfied.
\end{proof}

\begin{example}
There exist  strongly regular sequences for which the Stieltjes moment mapping is neither injective nor surjective. E.g., in \cite[Example 4.18, Remark 4.19]{JimenezSanzSchindlLCSNPO} (see also \cite[Example~2.2.26]{PhDJimenez})  the sequence $\M$ is defined via its sequence of quotients, $M_p=\prod_{j=0}^{p-1}m_j$, where
 $$
 m_0=1; \qquad m_p=e^{\delta_p/p}m_{p-1}=\exp\left(\sum^p_{k=1}\frac{\delta_k}{k} \right), \qquad p\in\Z_+,
 $$
and the sequence $(\delta_k)_{k \in \Z_+}$ still has to be determined. Consider the sequences
 $$k_{j}:=2^{3^j} <q_{j}:=k^2_j=2^{ 3^j 2}< k_{j+1}=2^{3^{j+1}}, \quad{j\in\N},$$
and choose $(\delta_k)_{k}$ as follows:
\begin{align*}
\delta_1&=\delta_2=2, \\
\delta_k&=3,\quad \text{if} \quad k\in \{k_j+1, \dots, q_j\}, j\in\N,\\
\delta_k&=2,\quad \text{if} \quad k\in \{q_j+1, \dots, k_{j+1}\}, j\in\N.
\end{align*}
One can prove that $\M$ is strongly regular and that $\ga(\M)=2<\om(\M)=5/2$. Then, the sequence $\M^{1/2}:=(M^{1/2}_p)_{p\in\N}$ is again strongly regular and $\ga(\M^{1/2})=1<5/4=\om(\M^{1/2})$. Hence, both the injectivity and surjectivity of the Stieltjes moment mapping are discarded.

Subsequently, in \cite{PhDJimenez} (see also \cite{JimenezSanzSchindlIndex}), a general procedure has been designed to obtain strongly regular sequences with preassigned positive values of $\ga(\M)$ and $\om(\M)$. In particular, one can choose strongly regular sequences $\M$ with $\ga(\M)\le 1<\om(\M)$ and thereby exclude both injectivity and surjectivity.
\end{example}

\section{A moment problem at the origin}\label{sect-mom_prob-orig}
Let $\M$ be a weight sequence. For $h>0$ we
define $\mathcal{D}^{\M,h}(0,1)$ as the space consisting of all $\varphi \in C^\infty(\R)$ with $\supp \varphi \subseteq [0,1]$ such that
$$
\| \varphi \|_{\M,h} := \sup_{p \in \N} \sup_{x \in [0,1]} \frac{|\varphi^{(p)}(x)|}{h^{p}p!M_p} < \infty.
$$
We set $\DM(0,1) = \bigcup_{h>0} \mathcal{D}^{\M,h}(0,1)$. Suppose that $\M$ satisfies $\wlc$, then $\DM(0,1)$ is non-trivial if and only if $\M$ satisfies $\nq$, as follows from the Denjoy-Carleman theorem. Notice that $\DM(0,1) \subset \SAM(0,\infty)$ for all weight sequences $\A$.
\begin{lemma}\label{taylor}
Let $\M$ be a weight sequence and let $\varphi \in \mathcal{D}^{\M,h}(0,1)$ for some $h > 0$. Then,
$$
|\varphi(x)| \leq \| \varphi\|_{\M,h} h^pM_px^p
$$
for all $x \in [0,1]$ and $p \in \N$.
\end{lemma}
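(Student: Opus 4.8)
The plan is to invoke Taylor's formula with integral remainder at the origin, exploiting that every derivative of $\varphi$ vanishes there. First I would dispose of the case $p = 0$: since $M_0 = 1$, the claimed inequality reads $|\varphi(x)| \leq \|\varphi\|_{\M,h}$ for $x \in [0,1]$, which is immediate from the definition of $\|\cdot\|_{\M,h}$ by taking $p = 0$ (and $x$ arbitrary) in the defining supremum.

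For $p \in \Z_+$ I would argue as follows. Because $\supp \varphi \subseteq [0,1]$, the function $\varphi$ vanishes identically on $(-\infty,0)$, and hence, each $\varphi^{(k)}$ being continuous, $\varphi^{(k)}(0) = 0$ for every $k \in \N$. Taylor's formula with integral remainder centered at $0$ therefore yields, for all $x \in [0,1]$,
$$
\varphi(x) = \frac{1}{(p-1)!} \int_0^x (x-t)^{p-1} \varphi^{(p)}(t) \dt .
$$
(Equivalently, one may write $\varphi(x)$ as the $p$-fold iterated integral of $\varphi^{(p)}$ over the simplex $\{0 \leq t_p \leq \cdots \leq t_1 \leq x\}$, whose volume is $x^p/p!$.) Bounding $|\varphi^{(p)}(t)| \leq \|\varphi\|_{\M,h} h^p p! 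M_p$ for $t \in [0,1]$ and using $\int_0^x (x-t)^{p-1}\dt = x^p/p$, I obtain
$$
|\varphi(x)| \leq \frac{\|\varphi\|_{\M,h} h^p p! M_p}{(p-1)!} \cdot \frac{x^p}{p} = \|\varphi\|_{\M,h} h^p M_p x^p ,
$$
which is the asserted estimate.

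The argument is routine and presents no genuine obstacle; the only point worth spelling out is the vanishing of all derivatives of $\varphi$ at the origin, which is an immediate consequence of the support condition, after which the claim reduces to the elementary computation above.
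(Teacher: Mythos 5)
Your proof is correct and follows essentially the same route as the paper: both deduce $\varphi^{(j)}(0)=0$ from the support condition and then apply Taylor's theorem at the origin with the bound $|\varphi^{(p)}(t)|\le\|\varphi\|_{\M,h}h^pp!M_p$ (the paper uses the Lagrange-type estimate $|\varphi(x)|\le\frac{1}{p!}\sup_{t\in[0,x]}|\varphi^{(p)}(t)|\,x^p$ while you use the integral remainder, which gives the identical bound).
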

\begin{proof}
Since $\varphi^{(j)}(0) = 0$ for all $j \in \N$, Taylor's theorem implies that
$$
|\varphi(x)| \leq \frac{1}{p!} \sup_{t \in [0,x]} |\varphi^{(p)}(t)| x^p \leq  \|\varphi\|_{\M,h} h^pM_px^p
$$
for all $x \in [0,1]$ and $p \in \N$.
\end{proof}

Let $\M$ be a weight sequence. For $\varphi \in \DM(0,1)$ we define
$$
\mu^0_p(\varphi) = \int_0^1 \frac{\varphi(x)}{x^p} \dx,  \qquad p \in \N.
$$
The mapping
$$
\mathcal{M}^0: \DM(0,1) \rightarrow \Lambda_{\widecheck\M}: \varphi \rightarrow (\mu^0_p(\varphi))_p
$$
is well-defined by Lemma \ref{taylor}. The goal of this section is to characterize
the injectivity and surjectivity of the mapping  $\mathcal{M}^0$ in terms of the defining weight sequence $\M$. We shall reduce these
problems to their counterparts for the Stieltjes moment mapping (Theorems \ref{inj-stiel} and
\ref{surj-stiel-modified}) via the following lemma.
\begin{lemma}\label{reduction}
Let $\M$ be a weight sequence and let $\varphi \in \mathcal{D}^{\M}(0,1)$. Then,
$$
\mu_p(\widehat{\varphi}) = i^{p+1} p! \mu^0_{p+1}(\varphi), \qquad p \in \N.
$$
\end{lemma}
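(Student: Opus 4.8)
This being a computational identity, the plan is to write both sides as iterated integrals and reconcile them; the single point requiring care is that the natural double integral is not absolutely convergent, which I would handle by a regularization.

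Since $\supp\varphi\subseteq[0,1]$, we have $\widehat\varphi(\xi)=\int_0^1\varphi(x)e^{ix\xi}\,\dx$, hence $\mu_p(\widehat\varphi)=\int_0^\infty\xi^p\widehat\varphi(\xi)\,\dxi$. First I would record that this last integral converges absolutely: integrating by parts $k$ times and using that $\varphi$ together with all its derivatives vanishes at $0$ and $1$ gives $\widehat\varphi(\xi)=(-1)^k(i\xi)^{-k}\int_0^1\varphi^{(k)}(x)e^{ix\xi}\,\dx$, so that $|\xi^p\widehat\varphi(\xi)|\le\|\varphi^{(p+2)}\|_{L^1}|\xi|^{-2}$ for $|\xi|\ge1$ on choosing $k=p+2$; together with continuity on $[0,1]$ this shows $\xi\mapsto\xi^p\widehat\varphi(\xi)$ is integrable on $[0,\infty)$ (taking instead $k=p$ one sees that $\widehat\varphi_{|[0,\infty)}\in\CM$, so the left-hand side is indeed well defined).

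The double integral $\int_0^\infty\int_0^1\xi^p\varphi(x)e^{ix\xi}\,\dx\,\dxi$ is not absolutely convergent, so Fubini's theorem cannot be applied to it directly. I would insert an exponential convergence factor: by dominated convergence, using the integrability just established,
$$
\mu_p(\widehat\varphi)=\lim_{\varepsilon\to0^+}\int_0^\infty\xi^pe^{-\varepsilon\xi}\widehat\varphi(\xi)\,\dxi .
$$
For fixed $\varepsilon>0$ the function $(x,\xi)\mapsto\xi^pe^{-\varepsilon\xi}\varphi(x)e^{ix\xi}$ is absolutely integrable on $[0,1]\times[0,\infty)$, so Fubini applies, and evaluating the inner integral by $\int_0^\infty\xi^pe^{-(\varepsilon-ix)\xi}\,\dxi=p!\,(\varepsilon-ix)^{-(p+1)}$ (legitimate since $\operatorname{Re}(\varepsilon-ix)=\varepsilon>0$) gives
$$
\int_0^\infty\xi^pe^{-\varepsilon\xi}\widehat\varphi(\xi)\,\dxi=p!\int_0^1\frac{\varphi(x)}{(\varepsilon-ix)^{p+1}}\,\dx .
$$
It then remains to pass to the limit $\varepsilon\to0^+$ inside this integral, again by dominated convergence: $|\varphi(x)(\varepsilon-ix)^{-(p+1)}|\le|\varphi(x)|\,x^{-(p+1)}$, which is integrable on $(0,1]$ by Lemma~\ref{taylor} (with exponent $p+1$). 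The limit equals $p!\int_0^1\varphi(x)(-ix)^{-(p+1)}\,\dx=p!\,(-i)^{-(p+1)}\mu^0_{p+1}(\varphi)$, which is $i^{p+1}p!\,\mu^0_{p+1}(\varphi)$ because $(-i)^{-1}=i$.

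The main obstacle is precisely the lack of absolute convergence of the interchanged integral, dealt with by the $e^{-\varepsilon\xi}$ regularization; everything else is routine, the flatness of $\varphi$ at the origin supplying both the decay of $\widehat\varphi$ and the integrability of $x\mapsto|\varphi(x)|x^{-(p+1)}$ near $0$. An alternative that sidesteps the regularization is to exploit that $\mathcal{L}(\varphi)$ is entire here (as $\varphi$ has compact support) and rotate the contour in $\int_0^\infty\xi^p\mathcal{L}(\varphi)(\xi)\,\dxi$ from the positive real axis to the positive imaginary axis: on the latter $\mathcal{L}(\varphi)(it)=\int_0^1\varphi(x)e^{-xt}\,\dx$ decays rapidly in $t$, so there Fubini does apply and delivers $i^{p+1}p!\,\mu^0_{p+1}(\varphi)$ at once, the quarter-circle arcs at infinity being controlled by the same integration-by-parts estimate.
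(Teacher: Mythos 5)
Your proof is correct, and it takes a genuinely different route from the paper's. The paper proves the identity by a chain of exchange formulas for the Fourier transform: it writes $\xi^p\widehat{\varphi}(\xi)=i^p\mathcal{F}(\varphi^{(p)})(\xi)$, recognizes $\mathcal{F}(\varphi^{(p)})$ as $-i$ times the derivative of $\mathcal{F}(\varphi^{(p)}(x)/x)$, integrates over $[0,\infty)$ using the fundamental theorem of calculus together with the Riemann--Lebesgue decay of $\mathcal{F}(\varphi^{(p)}(x)/x)$ at infinity, and finally performs $p$ integrations by parts in $x$ to convert $\int_0^1\varphi^{(p)}(x)x^{-1}\,\dx$ into $p!\int_0^1\varphi(x)x^{-(p+1)}\,\dx$; the flatness of $\varphi$ at $0$ and $1$ enters through the vanishing of the boundary terms and the integrability of $\varphi^{(p)}(x)/x$. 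You instead attack the iterated integral head-on via an Abel-type regularization $e^{-\varepsilon\xi}$, Fubini, the Gamma integral $\int_0^\infty\xi^pe^{-(\varepsilon-ix)\xi}\,\dxi=p!\,(\varepsilon-ix)^{-(p+1)}$, and two applications of dominated convergence, with Lemma~\ref{taylor} supplying the dominating function $|\varphi(x)|x^{-(p+1)}$ in the final limit. Your version is more self-contained and makes the convergence issues fully explicit (the paper's slick chain silently requires checking that $\varphi^{(p)}(x)/x\in L^1$ and that its transform vanishes at infinity, which rests on the same flatness of $\varphi$), at the cost of the $\varepsilon$-bookkeeping; your contour-rotation variant is also sound and is essentially a cleaner repackaging of the same idea. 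Both arguments identify correctly where the real content lies, namely the non-absolute convergence of the naive double integral and the role of the vanishing of all derivatives of $\varphi$ at the endpoints.
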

\begin{proof}
For all $p \in \N$ we have that
\begin{align*}
&\mu_p(\widehat{\varphi}) = \int_0^\infty \xi^p \widehat{\varphi}(\xi) \dxi = i^p  \int_0^\infty \mathcal{F}(\varphi^{(p)})(\xi) \dxi
= - i^{p+1}  \int_0^\infty (\mathcal{F}(\varphi^{(p)}(x)/x))'(\xi) \dxi  \\
& =  i^{p+1} \mathcal{F}(\varphi^{(p)}(x)/x)(0)
 =  i^{p+1} \int_0^1 \frac{\varphi^{(p)}(x)}{x} \dx
=  i^{p+1} p! \int_0^1 \frac{\varphi(x)}{x^{p+1}} \dx
= i^{p+1} p! \mu^0_{p+1}(\varphi).
\end{align*}
\end{proof}
We are ready to characterize the injectivity and surjectivity of the mapping $\mathcal{M}^0$.
\begin{theorem}\label{inj-origin} Let $\M$ be a weight sequence satisfying $\lc$, $\dc$ and $\nq$. Then, the following statements are equivalent:
\begin{itemize}
\item[$(i)$] $\displaystyle \sum_{p = 0}^\infty \frac{1}{((p+1)m_p)^{1/2}} = \infty$.
\item[$(ii)$] $\mathcal{M}: \mathcal{F}(\DM(0,1)) \rightarrow \Lambda_{\M}$ is injective.
\item[$(iii)$] $\mathcal{M}^0: \DM(0,1) \rightarrow \Lambda_{\widecheck\M}$ is injective.
\end{itemize}
\end{theorem}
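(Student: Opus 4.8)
The plan is to transport everything to the Stieltjes moment mapping and the asymptotic Borel mapping through the Fourier transform, using Lemma \ref{reduction} as the bridge, and then to invoke Theorems \ref{inj-borel} and \ref{inj-stiel}.

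I would first dispose of $(ii)\Leftrightarrow(iii)$. By Lemma \ref{reduction}, $\mu_p(\widehat\varphi)=i^{p+1}p!\,\mu^0_{p+1}(\varphi)$ for every $\varphi\in\DM(0,1)$, so $\mathcal{M}(\widehat\varphi)=0$ holds if and only if $\mu^0_q(\varphi)=0$ for all $q\geq 1$. Thus $(ii)$ says that a $\varphi\in\DM(0,1)$ with vanishing origin moments of positive order must be zero, while $(iii)$ says the same statement with $q\geq 0$; in particular $(ii)\Rightarrow(iii)$ is trivial. For $(iii)\Rightarrow(ii)$ I would use a division trick: if $\mu^0_q(\varphi)=0$ for all $q\geq 1$, put $\psi(x)=\varphi(x)/x=\int_0^1\varphi'(sx)\,{\rm d}s$. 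Since $\psi^{(p)}(x)=\int_0^1 s^p\varphi^{(p+1)}(sx)\,{\rm d}s$ and $\M$ satisfies $\dc$, a short estimate gives $\psi\in\DM(0,1)$, and clearly $\supp\psi\subseteq\supp\varphi$. As $\mu^0_q(\psi)=\mu^0_{q+1}(\varphi)=0$ for all $q\geq 0$, hypothesis $(iii)$ forces $\psi\equiv 0$, hence $\varphi\equiv 0$.

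Next, $(i)\Rightarrow(ii)$. For $\varphi\in\DM(0,1)$ the function $\widehat\varphi$ is entire (Paley--Wiener), and repeated integration by parts yields $|\xi|^p|\widehat\varphi(\xi)|\leq\int_0^1|\varphi^{(p)}(x)|\,{\rm d}x\leq h^pp!M_p$ for all $p$, so $\widehat\varphi_{|[0,\infty)}\in\CM$. Moreover $\mu_p(\widehat\varphi)=\mu_p(\widehat\varphi_{|[0,\infty)})$, and the restriction map $\widehat\varphi\mapsto\widehat\varphi_{|[0,\infty)}$ is injective, since an entire function vanishing on $[0,\infty)$ vanishes identically; hence $\mathcal{M}$ on $\mathcal{F}(\DM(0,1))$ is the composition of this injection with $\mathcal{M}:\CM\to\Lambda_\M$. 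If $(i)$ holds, then $\mathcal{M}:\CM\to\Lambda_\M$ is injective by Theorems \ref{inj-borel} and \ref{inj-stiel}, and $(ii)$ follows.

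The main obstacle is $(ii)\Rightarrow(i)$, equivalently $\neg(i)\Rightarrow\neg(ii)$: because $\mathcal{F}(\DM(0,1))$ is a small subspace of $\CM$ (its elements are Fourier transforms of compactly supported functions), injectivity of $\mathcal{M}$ on it does not transfer to $\CM$ for free, so one must exhibit a concrete kernel element. Assuming $\neg(i)$, Theorem \ref{inj-borel} furnishes a nonzero $f\in\mathcal{A}_\M(\HH)$ with $f^{(p)}(0)=0$ for all $p$; applying the multiplication Lemma \ref{multiplication} with $\A=\M$ (legitimate, since $\M$ satisfies $\wlc$, $\dc$ and $\nq$) one gets $fG=\widehat\psi$ for some nonzero $\psi\in\mathcal{S}^{\M}_{\M}(0,\infty)$ with $\mu_p(\psi)=0$ for all $p$. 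The remaining, and genuinely hard, task is to pass from such a $\psi$, supported only on $[0,\infty)$, to a nonzero $\varphi\in\DM(0,1)$ with vanishing origin moments of positive order --- equivalently (via $\widehat\varphi_{|[0,\infty)}$ and the Laplace transform of Lemma \ref{inj-lapl}) to a nonzero $\varphi\in\DM(0,1)$ whose Cauchy transform $\zeta\mapsto\int_0^1\varphi(x)/(x+\zeta)\,{\rm d}x$ is flat at $\zeta=0$. I would attempt this either by localising the construction to a compact interval --- a Möbius substitution $x\mapsto 1/x-1$ transports the moment conditions correctly but lands inside $\DM(0,1)$ only when $\M$ has moderate growth, which is not assumed here, so an extra damping is needed --- or by adapting directly the non-injectivity constructions for the Borel mapping in \cite{JG-S-S}. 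This step carries essentially all the difficulty of the theorem.
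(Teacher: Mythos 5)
Your treatment of $(i)\Rightarrow(ii)$ and of the equivalence $(ii)\Leftrightarrow(iii)$ is sound. For $(iii)\Rightarrow(ii)$ you divide by $x$, writing $\psi(x)=\varphi(x)/x=\int_0^1\varphi'(sx)\,{\rm d}s$ and checking $\psi\in\DM(0,1)$ via $\dc$; the paper instead differentiates, observing that $\varphi'\in\DM(0,1)$ by $\dc$ and that $\mu_0^0(\varphi')=0$ and $\mu_p^0(\varphi')=p\,\mu_{p+1}^0(\varphi)=0$ for $p\geq 1$, so $\varphi'\equiv 0$ and hence $\varphi\equiv 0$ by compact support. Both devices convert the vanishing of the origin moments of order $\geq 1$ into the vanishing of a full set of origin moments of an auxiliary function in $\DM(0,1)$, and both are correct.

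The genuine gap is $(ii)\Rightarrow(i)$, which you recast as the contrapositive and then leave unresolved. The difficulty you identify --- producing a nonzero element of $\mathcal{F}(\DM(0,1))$, i.e.\ the Fourier transform of a nonzero compactly supported ultradifferentiable function, all of whose Stieltjes moments vanish --- is real, and neither of your suggested escapes works under the stated hypotheses (you note yourself that the M\"obius substitution requires $\mg$). The paper avoids this construction entirely by arguing in the direct direction: assuming $(ii)$, it proves that $\mathcal{M}:\CM\rightarrow\Lambda_{\M}$ is injective, which yields $(i)$ by Theorem \ref{inj-stiel}. Concretely, given $\varphi\in\CM$ with all moments zero, one picks (using $\nq$) a $\psi\in\DM(0,1)$ that does not vanish on $(0,1)$ and forms $\widetilde{\varphi}\ast\widehat{\psi}=\mathcal{F}\bigl(\mathcal{L}(\varphi)_{|\R}(-\,\cdot)\cdot\psi\bigr)$, which lies in $\mathcal{F}(\DM(0,1))$ and whose moments vanish by the binomial identity $\mu_p(\widetilde{\varphi}\ast\widehat{\psi})=\sum_{j=0}^p\binom{p}{j}\mu_j(\varphi)\mu_{p-j}(\widehat{\psi})$. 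Hypothesis $(ii)$ then forces $\mathcal{L}(\varphi)_{|\R}(-\,\cdot)\cdot\psi\equiv 0$, hence $\mathcal{L}(\varphi)$ vanishes on the interval $(-1,0)$, and the Lusin--Privalov theorem (Proposition \ref{uncertainty}) upgrades this boundary vanishing on a set of positive measure to $\varphi\equiv 0$. In short: instead of transplanting a flat function into the rigid space $\mathcal{F}(\DM(0,1))$, the paper transports an arbitrary kernel element of $\CM$ into that space by convolution with a fixed $\widehat{\psi}$ and recovers it via a boundary uniqueness theorem. You would need this (or an equivalent) idea to close your argument.
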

\begin{remark}
If we assume that $\M$ satisfies $\lc$ and it does not satisfy $\nq$, then $\DM(0,1)$ is trivial and $\sum_{p = 0}^\infty ((p+1)m_p)^{-1}= \infty$, and so the three previous statements clearly hold true.
This justifies the hypothesis $\nq$ in Theorem~\ref{inj-origin}, while condition $\dc$ is needed in order to apply our previous results about the moment mapping $\mathcal{M}$.
\end{remark}
The proof of Theorem \ref{inj-origin} is based on the next result.

\begin{proposition}\label{uncertainty}
Let $\varphi \in L^1(\R)$ with $\supp \varphi \subseteq [0, \infty)$. If $\widehat{\varphi}$ vanishes on a subset of $\R$ with positive Lebesgue measure, then $\varphi = 0$ almost everywhere.
\end{proposition}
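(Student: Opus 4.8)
The plan is to exploit the analytic continuation of $\widehat{\varphi}$ to the open upper half-plane that the support condition provides. First I would introduce
$$
\Psi(\zeta) := \int_0^\infty \varphi(x) e^{ix\zeta} \dx, \qquad \zeta \in \overline{\HH}.
$$
For $\zeta = \xi + i\eta$ with $\eta \geq 0$ we have $|\varphi(x)e^{ix\zeta}| = |\varphi(x)| e^{-x\eta} \leq |\varphi(x)|$ on $[0,\infty)$, so $\Psi$ is well-defined and $\sup_{\zeta \in \overline{\HH}} |\Psi(\zeta)| \leq \| \varphi \|_{L^1(\R)}$. The routine arguments -- differentiation under the integral sign on each half-plane $\{ \Im \zeta > \delta \}$, using $\sup_{x \geq 0} x e^{-x\delta} < \infty$, together with dominated convergence -- show that $\Psi$ is holomorphic on $\HH$ and continuous on $\overline{\HH}$; since $\supp \varphi \subseteq [0,\infty)$, its boundary trace is $\Psi_{|\R} = \widehat{\varphi}$. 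In particular, for every $t \in \R$ the nontangential limit of $\Psi$ at $t$ equals $\widehat{\varphi}(t)$.

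The key point is then to deduce $\Psi \equiv 0$ from the hypothesis that its boundary function $\widehat{\varphi}$ vanishes on a set $E \subseteq \R$ of positive Lebesgue measure. Granting this, $\widehat{\varphi} = \Psi_{|\R} \equiv 0$, and the injectivity of the Fourier transform on $L^1(\R)$ yields $\varphi = 0$ almost everywhere, as claimed.

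To establish $\Psi \equiv 0$ I would invoke boundary uniqueness for bounded holomorphic functions: a function $\Psi \in H^\infty(\HH)$ with $\Psi \not\equiv 0$ lies in the Nevanlinna class, hence $\log|\Psi_{|\R}| \in L^1\!\left(\tfrac{{\rm d}t}{1+t^2}\right)$, and in particular its boundary function cannot vanish on a set of positive measure. Alternatively, one transports the problem to the unit disk via the Cayley transform $w \mapsto i\frac{1+w}{1-w}$, which restricts to a diffeomorphism of $\partial\mathbb{D}\setminus\{1\}$ onto $\R$ with nowhere-vanishing derivative, so that the preimage of $E$ has positive measure in $\partial\mathbb{D}$; one then applies the classical theorem that a bounded holomorphic function on $\mathbb{D}$ whose radial boundary values vanish on a set of positive measure is identically zero.

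The main obstacle is exactly this last step; all the preceding verifications are standard. If a self-contained treatment is wanted rather than a citation, I would prove the disk statement by passing to the limit $r \to 1^-$ in Jensen's inequality $\log|F(0)| \leq \tfrac{1}{2\pi}\int_0^{2\pi}\log|F(re^{i\theta})|\, {\rm d}\theta$, after first writing $F(w) = w^k G(w)$ with $G(0) \neq 0$ so that the left-hand side is finite; the only delicate point there is the passage to the limit under the integral, handled using that $\log^+|F|$ is bounded together with Fatou's lemma applied to $\log^-|F|$.
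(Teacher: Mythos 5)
Your argument is correct, and it differs from the paper's in the uniqueness theorem it invokes. Both proofs start the same way: the support condition gives the holomorphic extension $\Psi(\zeta)=\int_0^\infty\varphi(x)e^{ix\zeta}\,{\rm d}x$ of $\widehat{\varphi}$ to $\HH$, continuous on $\overline{\HH}$ with boundary trace $\widehat{\varphi}$, and once $\Psi\equiv 0$ is established the injectivity of the Fourier transform on $L^1(\R)$ finishes the proof. The paper then quotes the Lusin--Privalov theorem (Theorem \ref{thPrivalov}), which asserts that \emph{any} holomorphic function on $\HH$ whose nontangential limits vanish on a set of positive measure is identically zero; no growth hypothesis on $\Psi$ is needed there. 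You instead observe that $|\Psi|\le\|\varphi\|_{L^1}$ on $\overline{\HH}$, so $\Psi\in H^\infty(\HH)$, and invoke the standard Hardy-space fact that a nonzero function of the Nevanlinna class has $\log$-integrable boundary values --- equivalently, after the Cayley transform, the Jensen-inequality argument on the disk. Your route is more elementary and self-contained: the disk statement is provable in a few lines as you sketch, whereas Lusin--Privalov is a substantially deeper theorem, and the boundedness your argument requires comes for free from $\varphi\in L^1$. The paper's citation is shorter but imports a stronger tool than the problem needs. One small simplification available to you: since $\Psi$ is continuous on $\overline{\HH}$, its boundary values are honest limits equal to $\widehat{\varphi}$ everywhere, so in the disk argument you need no appeal to Fatou's theorem on the a.e.\ existence of radial limits --- only the uniform bound on $\int\log^{-}|F(re^{i\theta})|\,{\rm d}\theta$ coming from Jensen's inequality, followed by Fatou's lemma as $r\to 1^{-}$.
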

Proposition \ref{uncertainty} follows directly from the Lusin-Privalov theorem, which we include here for the reader's convenience.

\begin{theorem}[of Lusin and Privalov~\cite{LusinPrivalov}]\label{thPrivalov}
Let $F\in\mathcal{O}(\HH)$ and suppose that there exists a set $A\subset \R$ with positive Lebesgue measure such that, for every $x\in A$ and $0<\delta<1$, it holds that
$$
\lim_{\genfrac{}{}{0pt}{1}{z\to x}{z\in S_{x,\delta}}}F(z)=0,
$$
where $S_{x,\delta}\subset \HH$ is the sector with vertex at $x$, vertical bisecting direction and opening $\pi\delta$. Then, $F \equiv 0$ on $\HH$.
\end{theorem}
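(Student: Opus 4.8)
The plan is to localize the hypothesis to a region on which $F$ is bounded, transfer the situation to the unit disc via a conformal map, and conclude with the uniqueness theorem for bounded holomorphic functions. Assume $F\not\equiv 0$ on $\HH$ and fix, say, $\delta=1/2$. For each $x\in A$ the hypothesis provides some $\rho(x)>0$ with $\sup\{|F(z)|:z\in S_{x,1/2},\ |z-x|\le\rho\}\to 0$ as $\rho\to 0^{+}$; pigeonholing over $n\in\Z_+$, passing to a compact subset by inner regularity of Lebesgue measure, and then applying Egorov's theorem to the pointwise-decreasing functions $x\mapsto\sup\{|F(z)|:z\in S_{x,1/2},\ |z-x|\le 1/(nj)\}$, I would extract a compact set $K\subseteq A$ with $|K|>0$, a radius $r>0$ and a sequence $\varepsilon_j\downarrow 0$ such that
$$
|F(z)|\le\varepsilon_j\qquad\text{whenever } z\in S_{x,1/2},\ |z-x|\le r/j,\ x\in K .
$$
Here I would use the elementary observation that a point $z=t+iy\in\HH$ satisfies $\operatorname{dist}(t,K)<y$ precisely when the point $x_t\in K$ closest to $t$ obeys $z\in S_{x_t,1/2}$ and $|z-x_t|<\sqrt 2\,y$. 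Consequently, with $c_0:=r/\sqrt2$, the open ``sawtooth'' set
$$
\Omega:=\{\,t+iy:\operatorname{dist}(t,K)<y<c_0\,\}
$$
has $|F|\le\varepsilon_1$ on it, and $F(z)\to 0$ whenever $z\in\Omega$ approaches a point of $K$ (then $y\to 0$, so $|z-x_t|<\sqrt2\,y\to 0$, whence $|z-x_t|\le r/j$ eventually for every $j$). Since $\operatorname{dist}(\cdot,K)$ is $1$-Lipschitz, each connected component of $\Omega$ is the bounded region trapped between the graph of a Lipschitz function and a horizontal segment, hence a bounded Jordan domain with rectifiable boundary; as $|K|>0$ I would fix one such component $\Omega_0$ for which $K_0:=K\cap\partial\Omega_0$ has positive Lebesgue measure.

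Next I would take a Riemann map $\phi:\mathbb{D}\to\Omega_0$. By Carathéodory's theorem it extends to a homeomorphism $\overline{\mathbb{D}}\to\overline{\Omega_0}$, and since $\partial\Omega_0$ is a rectifiable Jordan curve, the theorem of F.\ and M.\ Riesz gives that harmonic measure on $\partial\Omega_0$ and arclength are mutually absolutely continuous; in particular $E:=\phi^{-1}(K_0)\subseteq\partial\mathbb{D}$ has positive Lebesgue measure. The function $g:=F\circ\phi$ is holomorphic and bounded (by $\varepsilon_1$) on $\mathbb{D}$, so by Fatou's theorem its non-tangential boundary function $g^{\ast}$ exists a.e.\ on $\partial\mathbb{D}$; for every $\zeta\in E$, continuity of $\phi$ forces $\phi(w)\to\phi(\zeta)\in K_0$ as $w\to\zeta$ inside $\overline{\mathbb{D}}$, hence $g(w)=F(\phi(w))\to 0$ by the previous paragraph, so $g^{\ast}=0$ a.e.\ on $E$. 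If $g\not\equiv 0$, then after dividing out a possible zero at the origin and using Jensen's inequality — equivalently, the fact that a nonzero function in the Nevanlinna class has log-integrable, and therefore a.e.\ nonzero, boundary values — this would contradict $|E|>0$. Hence $g\equiv 0$, so $F\equiv 0$ on $\Omega_0$, and by the identity theorem $F\equiv 0$ on $\HH$, contradicting the assumption.

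The hard part is the passage to a well-behaved domain together with the control of boundary measure under the conformal map: one must verify that the Lusin--Egorov step genuinely yields \emph{uniform} decay of $|F|$ near $K$ (so that $F$ extends continuously by $0$ to all of $\Omega$, not merely along the original sectors $S_{x,1/2}$), and that $\Omega_0$ is a rectifiable Jordan domain, so that the theorem of F.\ and M.\ Riesz prevents a positive-length boundary set from collapsing to a null subset of $\partial\mathbb{D}$. The description of $\Omega$ as the region above the graph of $\operatorname{dist}(\cdot,K)$ makes the relevant geometry transparent, but it still has to be set up carefully. An alternative that avoids the conformal map is to show directly that $K_0$ carries positive harmonic measure in the Lipschitz domain $\Omega_0$ and then apply a two-constants (Nevanlinna-class) argument to the subharmonic function $\log|F|$, which is bounded above on $\Omega_0$; this leads to the same contradiction. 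One may of course also simply invoke the classical statement, e.g.\ from Privalov's monograph.
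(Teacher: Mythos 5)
The paper does not actually prove this statement: it is quoted as a classical theorem of Lusin and Privalov and simply cited from their 1925 paper, so there is no internal proof to compare yours against step by step. Your proposal is a correct, self-contained proof along the standard modern lines (as in Koosis' or Garnett's treatments of Privalov's uniqueness theorem): a pigeonhole/Egorov step producing a compact $K\subseteq A$ of positive measure with uniform sectorial decay of $|F|$; the sawtooth domain $\Omega=\{t+iy:\operatorname{dist}(t,K)<y<c_0\}$, on which $F$ is bounded and tends to $0$ at every point of $K$ (your observation that $\operatorname{dist}(t,K)<y$ places $z=t+iy$ in $S_{x_t,1/2}$ with $|z-x_t|<\sqrt2\,y$ is exactly what makes this work); the identification of each component of $\Omega$ as a rectifiable Jordan domain, so that Carath\'eodory extension plus the F.\ and M.\ Riesz theorem keep $E=\phi^{-1}(K_0)$ of positive measure on $\partial\mathbb{D}$; and finally the log-integrability of the boundary values of a nonzero bounded holomorphic function. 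The one point you should make explicit is the measurability of $x\mapsto\sup\{|F(z)|:z\in S_{x,1/2},\,|z-x|\le 1/n\}$ needed for Egorov; it holds because this is a supremum of the continuous functions $x\mapsto|F(x+w)|$ over a countable dense set of $w$ in a fixed truncated sector, hence is lower semicontinuous. What the citation buys the paper is brevity; what your argument buys is transparency and the observation that a single aperture $\delta=1/2$ already suffices, i.e., a formally weaker hypothesis than the one stated yields the conclusion.
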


\begin{proof}[Proof of Proposition \ref{inj-origin}]
$(i) \Rightarrow (ii)$: Follows directly from Theorem \ref{inj-stiel}, since it is clear that $\mathcal{F}(\DM(0,1))\subset \CM$.

$(ii) \Rightarrow (i)$: By Theorem \ref{inj-stiel} it suffices to show that $\mathcal{M}: \CM \rightarrow \Lambda_{\M}$ is injective. Let $\varphi \in \CM$ be such that $\mu_p(\varphi) = 0$ for all $p \in \N$. Set
$$
\widetilde{\varphi}(x) =
\left\{
	\begin{array}{ll}
		\mbox{$\varphi(x)$},  &  x \geq 0, \\ \\
		\mbox{$0$},  &  x < 0.
	\end{array}
\right.
$$
Next, by condition $\nq$, we can choose $\psi \in \DM(0,1)$ such that $\psi(x) \neq 0$ for all $x \in (0,1)$ . Then,
$$
\widetilde{\varphi} \ast \widehat{\psi}
= \mathcal{F}(\mathcal{L}(\varphi)_{|\R}(- \, \cdot) \cdot \psi)
\in \mathcal{F}(\DM(0,1)),
$$
as follows from Lemmas \ref{extension} and \ref{inj-lapl}. Moreover, we have that
$$
\mu_p(\widetilde{\varphi} \ast \widehat{\psi}) = \sum_{j = 0}^p \binom{p}{j} \mu_j(\varphi) \mu_{p-j}(\widehat{\psi}) = 0, \qquad p \in \N.
$$
Hence, $\widetilde{\varphi} \ast \widehat{\psi} \equiv 0$ and, thus, also $\mathcal{F}^{-1}(\widetilde{\varphi} \ast \widehat{\psi}) = 
\mathcal{L}(\widetilde{\varphi})_{|\R}(- \, \cdot) \cdot \psi\equiv 0 $. Since $\psi(x) \neq 0$ for all $x \in (0,1)$, we obtain that $\mathcal{F}(\widetilde{\varphi})(\xi) =  \mathcal{L}(\varphi)_{|\R}( \xi) = 0$ for all $\xi \in (-1,0)$. Proposition \ref{uncertainty} yields that $\widetilde{\varphi} = 0$ almost everywhere. As $\varphi$ is continuous on $[0,\infty)$, we may conclude that $\varphi \equiv 0$ on $[0,\infty)$.

$(ii) \Rightarrow (iii)$: Follows directly from Lemma \ref{reduction}.

$(iii) \Rightarrow (ii)$: Let $\varphi \in \DM(0,1)$ be such that $\mu_p(\widehat{\varphi}) = 0$ for all $p \in \N$. By Lemma \ref{reduction} we have that $\mu^0_p(\varphi) = 0$ for all $p \in \Z_+$. Condition $\dc$ implies that $\varphi' \in \DM(0,1)$. Moreover, we have that $\mu^0_0(\varphi') = \int_0^1 \varphi'(x)\dx =0$ and
\begin{equation}\label{moments_derivative}
\mu^0_p(\varphi') = \int_0^1 \frac{\varphi'(x)}{x^p} \dx = p \int_0^1 \frac{\varphi(x)}{x^{p+1}} \dx = p\mu^0_{p+1}(\varphi) = 0,\quad p\in\Z_{+}.
\end{equation}
Hence, $\varphi' \equiv 0$ and, since $\varphi$ is compactly supported, we obtain that $\varphi \equiv 0$.
\end{proof}

\begin{theorem} \label{surj-origin-modified} Let $\M$ be a weight sequence satisfying $\lc$ and $\dc$. Then, the following statements are equivalent:
\begin{itemize}
\item[$(i)$] $\mathcal{M}^0: \DM(0,1) \rightarrow \Lambda_{\widecheck\M}$ is surjective.
\item[$(ii)$] $\mathcal{M}: \mathcal{F}(\DM(0,1)) \rightarrow \Lambda_{\M}$ is surjective.
\end{itemize}
Each of the previous statements implies the next one:
\begin{itemize}
\item[$(iii)$] $\displaystyle \sup_{p \in \N} \frac{m_p}{p+1}\sum_{q = p}^\infty \frac{1}{m_q} < \infty$ or, equivalently, $\gamma(\M)>1$.
\end{itemize}
If, in addition, $\M$ satisfies $\mg$, then all the previous statements are equivalent.
\end{theorem}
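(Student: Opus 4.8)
The plan is to close the circle by proving $(i)\Rightarrow(ii)\Rightarrow(i)$ together with $(ii)\Rightarrow(iii)$ under the standing hypotheses, and then $(iii)\Rightarrow(i)$ when $\M$ in addition satisfies $\mg$; throughout, the bridge between $\mathcal{M}^0$ and $\mathcal{M}$ is Lemma~\ref{reduction}. For $(i)\Rightarrow(ii)$, given $(d_p)_p\in\Lambda_{\M}$ set $c_0=0$ and $c_{p+1}=(-i)^{p+1}d_p/p!$; since $\M$ satisfies $\lc$ one has $M_p\le M_{p+1}/m_0$, from which $(c_p)_p\in\Lambda_{\widecheck\M}$ follows, so by $(i)$ there is $\varphi\in\DM(0,1)$ with $\mu^0_p(\varphi)=c_p$, and Lemma~\ref{reduction} gives $\mu_p(\widehat\varphi)=i^{p+1}p!\,c_{p+1}=d_p$, that is, $\mathcal{M}(\widehat\varphi)=(d_p)_p$. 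For $(ii)\Rightarrow(iii)$, the elementary bound $|\xi^p\widehat\varphi(\xi)|=|\widehat{\varphi^{(p)}}(\xi)|\le\|\varphi^{(p)}\|_{L^1(\R)}$ shows $\mathcal{F}(\DM(0,1))\subseteq\CM$, so surjectivity of $\mathcal{M}$ on the subspace $\mathcal{F}(\DM(0,1))$ forces surjectivity of $\mathcal{M}:\CM\to\Lambda_{\M}$, whence $\gamma(\M)>1$ by Theorem~\ref{surj-stiel-modified} (its implication $(iii)\Rightarrow(v)$). In particular $(i)\Rightarrow(iii)$ as well.

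For $(ii)\Rightarrow(i)$: if $\M$ fails $\nq$ then $\DM(0,1)=\{0\}$ and both statements are false, so assume $\nq$ and fix, by the Denjoy--Carleman theorem, $\eta\in\DM(0,1)$ with $\eta\ge0$ and $\mu^0_0(\eta)=1$; write $\tau_p=\mu^0_p(\eta)$. Given $(c_p)_p\in\Lambda_{\widecheck\M}$, put $c'_p=c_p-c_0\tau_p$, so $c'_0=0$ and $(c'_p)_p\in\Lambda_{\widecheck\M}$. Applying $(ii)$ to the sequence $(d_p)_p\in\Lambda_{\M}$ given by $d_0=0$ and $d_p=i^{p+1}(p-1)!\,c'_p$ for $p\ge1$ yields, through Lemma~\ref{reduction}, a function $\Psi\in\DM(0,1)$ with $\mu^0_q(\Psi)=c'_{q-1}/(q-1)$ for $q\ge2$. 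By $\dc$ the derivative $\psi:=\Psi'$ again lies in $\DM(0,1)$, and integration by parts (cf.\ \eqref{moments_derivative}) gives $\mu^0_0(\psi)=0=c'_0$ and $\mu^0_p(\psi)=p\,\mu^0_{p+1}(\Psi)=c'_p$ for $p\ge1$. Hence $\varphi:=c_0\eta+\psi\in\DM(0,1)$ has $\mu^0_p(\varphi)=c_0\tau_p+c'_p=c_p$ for all $p$, so $\mathcal{M}^0(\varphi)=(c_p)_p$. The key point is that $\psi=\Psi'$ automatically has vanishing zeroth moment, so the zeroth moment that Lemma~\ref{reduction} does not detect is recovered by the elementary summand $c_0\eta$.

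The substantial implication is $(iii)\Rightarrow(i)$ under $\mg$. Then $\gamma(\M)>1$ forces $\snq$, hence $\nq$, so $\M$ is strongly regular; Theorem~\ref{surj-borel} gives surjectivity of $\mathcal{B}:\mathcal{A}_{\M}(\HH)\to\Lambda_{\M}$, and Theorem~\ref{surj-stiel-modified} applied with $\A=\M$ (admissible, as $\M$ then satisfies $\wlc$ and $\nq$) gives surjectivity of $\mathcal{M}:\mathcal{S}^{\M}_{\M}(0,\infty)\to\Lambda_{\M}$. Given $(c_p)_p\in\Lambda_{\widecheck\M}$, form the inverse binomial transform $d_k=\sum_{j=0}^k\binom{k}{j}(-1)^{k-j}c_j$; since $\widecheck\M\subset\M$ one has $(d_k)_k\in\Lambda_{\widecheck\M}\subseteq\Lambda_{\M}$, so there is $\phi\in\mathcal{S}^{\M}_{\M}(0,\infty)$ with $\mu_k(\phi)=d_k$. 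Put $g(t)=\phi(t-1)$ on $[1,\infty)$, extended by $0$ for $t<1$; since $\supp\phi\subseteq[0,\infty)$ and $\phi$ is flat at the origin, $g$ is smooth, decays super-exponentially, and $\int_1^\infty g(t)t^p\,\dt=\int_0^\infty\phi(u)(u+1)^p\,\du=\sum_{k=0}^p\binom{p}{k}d_k=c_p$. Finally set $\varphi(x)=x^{-2}g(1/x)=x^{-2}\phi(1/x-1)$ for $x\in(0,1]$ and $\varphi\equiv0$ otherwise; the change of variable $x=1/t$ gives $\mu^0_p(\varphi)=\int_0^1\varphi(x)x^{-p}\,\dx=\int_1^\infty g(t)t^p\,\dt=c_p$ for all $p$, i.e.\ $\mathcal{M}^0(\varphi)=(c_p)_p$.

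It therefore remains to verify that $\varphi\in\DM(0,1)$, and this is the main obstacle. That $\supp\varphi\subseteq[0,1]$, that $\varphi$ is flat at $x=1$, and that $\varphi$ vanishes to infinite order at $x=0$ are immediate from $\supp\phi\subseteq[0,\infty)$, the flatness of $\phi$ at $0$, and the super-exponential decay of $\phi$ (which dominates the factor $x^{-2}$ and the negative powers of $x$ produced upon differentiation). The core estimate is $|\varphi^{(p)}(x)|\le h^pp!M_p$ on $(0,1]$: a Fa\`a di Bruno expansion with inner function $1/x-1$ introduces Lah-type combinatorial coefficients and negative powers of $x$; on $[1/2,1]$ these powers are bounded and the estimate closes from $|\phi^{(q)}(x)|\le h^qq!M_q$ together with $\mg$, used to replace products $M_aM_b$ by $M_{a+b}$ after absorbing binomial factors, whereas near $x=0$ one invokes the joint Gelfand--Shilov bound $|s^a\phi^{(b)}(s)|\le h^{a+b}a!M_a\,b!M_b$ with $s=1/x-1$ and $a$ chosen in terms of $p$ so that the singular powers of $x$ cancel. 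Moderate growth of $\M$ is exactly what makes this recombination succeed, which is why it is required for the converse and why $\dc$ alone does not suffice. I expect this change-of-variables estimate to be the most delicate computation of the proof.
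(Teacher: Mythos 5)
Your handling of $(i)\Leftrightarrow(ii)$ and of $(ii)\Rightarrow(iii)$ is correct and essentially the paper's route via Lemma~\ref{reduction} and the inclusion $\mathcal{F}(\DM(0,1))\subseteq\CM$; in fact your $(ii)\Rightarrow(i)$, which reduces to $c_0'=0$ by subtracting $c_0\,\mu^0_p(\eta)$ and restores the zeroth moment with the corrector $c_0\eta$, treats the $p=0$ case more carefully than the published argument (where $\mu^0_0(\varphi')=0$ as written). The genuine problem is the implication $(iii)\Rightarrow(i)$ under $\mg$, and it is not that the Fa\`a di Bruno estimate you defer is delicate: it is false. The function $\varphi(x)=x^{-2}\phi(1/x-1)$ with $\phi\in\mathcal{S}_{\M}^{\M}(0,\infty)$ does not belong to $\DM(0,1)$ in general, already at the level of pointwise decay. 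By Lemma~\ref{taylor}, any $\varphi\in\mathcal{D}^{\M,h}(0,1)$ satisfies $|\varphi(x)|\le C\inf_p h^pM_px^p=Ce^{-\omega_{\M}(1/(hx))}$ near $0$, whereas the Gelfand--Shilov bounds on $\phi$ only yield $|\phi(s)|\le Ce^{-\omega_{\widehat{\M}}(s/h)}$, hence $|\varphi(x)|\le Cx^{-2}e^{-\omega_{\widehat{\M}}(c/x)}$; and $\omega_{\widehat{\M}}$ is strictly smaller than $\omega_{\M}$ (for $M_p=p!^{\alpha-1}$, $\omega_{\M}(t)\asymp t^{1/(\alpha-1)}$ versus $\omega_{\widehat{\M}}(t)\asymp t^{1/\alpha}$). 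A generic solution $\phi$ of the Stieltjes problem saturates its decay bound, so $\varphi$ decays too slowly at the origin. The same mismatch appears in the derivative estimates: the single Leibniz term $(x^{-2})^{(p)}\phi(1/x-1)=\pm(p+1)!\,(1+s)^{p+2}\phi(s)$ is of size $(p+1)!\,(p+2)!\,M_{p+2}\,h^{p}\approx h'^{\,p}\,p!\,\widehat{M}_p$, an extra factor $p!$ beyond the required $p!M_p$; $\mg$ recombines $M_aM_b$ into $M_{a+b}$ but cannot absorb a factorial. Nor can you repair this by demanding decay $e^{-\omega_{\M}}$ of $\phi$: that corresponds to solving the Stieltjes problem at the level of $\widecheck{\M}$, which requires $\gamma(\widecheck{\M})=\gamma(\M)-1>1$, not implied by $(iii)$. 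In short, the inversion $x\mapsto 1/x$ trades $\M$ for $\widehat{\M}$ and is the wrong transport between the problem at infinity and the problem at the origin.

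The paper instead proves $(iii)\Rightarrow(ii)$ by convolution rather than change of variables: fix $\psi\in\DM(0,1)$ with $\mu^0_1(\psi)\neq0$, set $\chi=\widehat{\psi}_{|[0,\infty)}$ and $G=\mathcal{L}(\chi)_{|\R}$, modify the target sequence to $b_p=(-i)^p\sum_j\binom{p}{j}i^jc_j(1/G)^{(p-j)}(0)$, solve the Stieltjes problem in $\CM$ for $(b_p)_p$ using Theorem~\ref{surj-stiel-modified}, and take $\widetilde{\varphi}\ast\widehat{\psi}$. Its inverse Fourier transform is $\mathcal{L}(\widetilde{\varphi})_{|\R}(-\,\cdot\,)\cdot\psi$, which lies in $\DM(0,1)$ because multiplication by $\psi$ simultaneously localizes the support to $[0,1]$ and preserves the $\M$-estimates ($\mathcal{L}(\widetilde{\varphi})_{|\R}\in\mathcal{E}^{\M}_{\infty}(\R)$ by Lemmas~\ref{extension} and~\ref{inj-lapl}, together with \eqref{eq-conseq-lc}); Lemma~\ref{inversion-2} then gives $\mu_p(\widetilde{\varphi}\ast\widehat{\psi})=c_p$. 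The one nonelementary ingredient is that $(b_p)_p\in\Lambda_{\M}$, which rests on Malliavin's inverse-closedness theorem for $\mathcal{E}^{\M}_{\infty}$. Some such localization device is needed; your construction as it stands does not produce an element of $\DM(0,1)$.
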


We need a lemma in preparation of the proof of  Theorem \ref{surj-origin-modified}.
\begin{lemma}\label{inversion-2}
Let $\M$ be a weight sequence and let $\chi \in \CM$ be such that $\mu_0(\chi) \neq 0$. Define $G = \mathcal{L}(\chi)_{|\R}$ and notice that $G(0) = \mu_0(\chi) \neq 0$. For $(c_p)_p \in \C^\N$ we set
$$
b_p = (-i)^p \sum_{j = 0}^p \binom{p}{j} i^jc_j \left ( \frac{1}{G} \right)^{(p-j)} (0), \qquad p \in \N.
$$
Then,
$$
\sum_{j=0}^p \binom{p}{j} b_j \mu_{p-j}(\chi) = c_p, \qquad p\in \N.
$$
\end{lemma}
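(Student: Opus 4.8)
The plan is to reduce the asserted identity to Lemma~\ref{inversion}; the only preliminary work is to check that $G$ is smooth near the origin and to record how its Taylor coefficients at $0$ encode the moments of $\chi$.

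First I would note that $G=\mathcal{L}(\chi)_{|\R}$ is of class $C^\infty$ on $\R$. Indeed, since $\chi\in\CM$, the functions $x\mapsto x^p\chi(x)$ all lie in $L^1(0,\infty)$ (this is built into the definition of $\CM$, and is in any case implicit in the formulation of the lemma, which refers to $\mu_p(\chi)$ for every $p$), so one may differentiate under the integral sign in $G(\xi)=\int_0^\infty\chi(x)e^{ix\xi}\dx$ to get
$$
G^{(p)}(\xi)=\int_0^\infty(ix)^p\chi(x)e^{ix\xi}\dx,\qquad \xi\in\R,\ p\in\N.
$$
Evaluating at $\xi=0$ yields the fundamental relation recalled at the start of Section~\ref{sect-Stiel-mom-problem},
$$
G^{(p)}(0)=i^p\mu_p(\chi),\qquad\text{equivalently}\qquad\mu_p(\chi)=(-i)^pG^{(p)}(0),\qquad p\in\N.
$$
In particular $G(0)=\mu_0(\chi)\neq0$, so $G$ satisfies the hypotheses of Lemma~\ref{inversion}.

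Next I would apply Lemma~\ref{inversion} to the function $G$ and the input sequence $(i^jc_j)_j$ in place of $(c_j)_j$. Denoting by $\beta_p=\sum_{j=0}^p\binom{p}{j}i^jc_j(1/G)^{(p-j)}(0)$ the sequence produced by that lemma, its conclusion reads
$$
\sum_{j=0}^p\binom{p}{j}\beta_j\,G^{(p-j)}(0)=i^pc_p,\qquad p\in\N,
$$
while, comparing with the definition in the statement, one simply has $b_p=(-i)^p\beta_p$. Finally I would substitute $b_j=(-i)^j\beta_j$ and $\mu_{p-j}(\chi)=(-i)^{p-j}G^{(p-j)}(0)$ into the left-hand side of the claimed identity:
$$
\sum_{j=0}^p\binom{p}{j}b_j\mu_{p-j}(\chi)=\sum_{j=0}^p\binom{p}{j}(-i)^j\beta_j\,(-i)^{p-j}G^{(p-j)}(0)=(-i)^p\sum_{j=0}^p\binom{p}{j}\beta_j\,G^{(p-j)}(0)=(-i)^p\,i^pc_p=c_p,
$$
which is exactly the assertion. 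There is no real obstacle here: the statement follows formally from Lemma~\ref{inversion}, and the only steps needing a word of care are the differentiation under the integral sign (granting $G\in C^\infty$ with $G^{(p)}(0)=i^p\mu_p(\chi)$) and the routine bookkeeping of the powers of $i$ that converts moment language into the language of Taylor coefficients at the origin.
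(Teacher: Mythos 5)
Your proof is correct and follows essentially the same route as the paper: both reduce the identity to Lemma~\ref{inversion} via the relation $\mu_{p-j}(\chi)=(-i)^{p-j}G^{(p-j)}(0)$, with your version merely being more explicit about feeding the sequence $(i^jc_j)_j$ into that lemma and about the smoothness of $G$ near the origin. No issues.
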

\begin{proof}
 Lemma \ref{inversion} implies that
$$
\sum_{j=0}^p \binom{p}{j} b_j \mu_{p-j}(\chi) = \sum_{j=0}^p \binom{p}{j} b_j (-i)^{p-j} G^{(p-j)}(0) = c_p, \qquad p \in \N.
$$
\end{proof}

\begin{proof}[Proof of Theorem \ref{surj-origin-modified}]
We first prove the equivalence of $(i)$ and $(ii)$.

$(i) \Rightarrow (ii)$: Follows directly from Lemma \ref{reduction}.

$(ii) \Rightarrow (i)$: Let $(c_p)_p \in \Lambda_{\widecheck{\M}}$ be arbitrary. Set $b_p = i^{p+1} (p-1)!c_p$, $p \in \N$. Then, $(b_p)_p \in \Lambda_{\M}$. Choose $\varphi \in \DM(0,1)$ such that $\mu_p(\widehat{\varphi}) = b_p$ for all $p \in \N$. Consider the function $\varphi'$, which belongs to $\DM(0,1)$ because of $\dc$. Then, the computation in~\eqref{moments_derivative} and Lemma \ref{reduction} imply that
$$
\mu_p^0(\varphi') = p\mu_{p+1}^0(\varphi) = \frac{p\mu_p(\widehat{\varphi})}{i^{p+1}p!} =
\frac{pb_p}{i^{p+1}p!} = c_p, \qquad p \in \N.
$$

For the second part of the theorem, observe that, since $\mathcal{F}(\DM(0,1))\subset \CM$, the implication $(ii) \Rightarrow (iii)$ follows directly from $(iii) \Rightarrow (v)$ in Theorem \ref{surj-stiel-modified}.

Finally, if $\M$ additionally  satisfies $\mg$ and we depart from $(iii)$, as before we first deduce that $\M$ is strongly regular and, thus, satisfies $\nq$.
Let $(c_p)_{p} \in \Lambda_{\M}$ be arbitrary. Choose $\psi \in \DM(0,1)$ such that $\mu_1^0(\psi) \neq 0$. Define $\chi = \widehat{\psi}_{|[0,\infty)} \in \CM$ and $G = \mathcal{L}(\chi)_{|\R}$. Notice that, by Lemma \ref{reduction}, we have that $G(0) = \mu_0(\chi) = i \mu_1^0(\psi) \neq 0$. Set
$$
b_p = (-i)^p \sum_{j = 0}^p \binom{p}{j} i^jc_j \left ( \frac{1}{G} \right)^{(p-j)} (0), \qquad p \in \N.
$$
We claim that $(b_p)_{p} \in \Lambda_{\M}$. Indeed, choose $C,h > 0$ such that $|c_p| \leq C h^p p! M_p$ for all $p \in \N$. Next, since $G \in \mathcal{E}^{\M}_\infty(\R)$ (Lemmas \ref{extension} and \ref{inj-lapl}), $G(0) \neq 0$ and $\M$ satisfies $\lc$, a classical result of Malliavin on the inverse-closedness of algebras of ultradifferentiable functions \cite[p.\ 185, 4.1]{Malliavin} implies that there are $C', k > 0$ such that $ |(1/G)^{(p)}(0)| \leq C' k^p p!M_p$ for all $p \in \N$. Hence,
$$
|b_p| \le C C'\sum_{j = 0}^p \binom{p}{j} h^j j!M_j k^{p-j} (p-j)!M_{p-j} \leq  CC' (h +k)^p p!M_p, \qquad p \in \N,
$$
where we have used~\eqref{eq-conseq-lc}. 
By Theorem \ref{surj-stiel-modified}, part $(v)\Rightarrow(iii)$, there is $\varphi \in  \CM$ such that $\mu_p(\varphi) = b_p$ for all $p \in \N$.  Set
$$
\widetilde{\varphi}(x) =
\left\{
	\begin{array}{ll}
		\mbox{$\varphi(x)$},  &  x \geq 0, \\ \\
		\mbox{$0$},  &  x < 0.
	\end{array}
\right.
$$
Then, $\widetilde{\varphi} \ast \widehat{\psi} \in \mathcal{F}(\DM(0,1))$ (cf.\ the proof of Proposition~\ref{inj-origin}). Finally, Lemma \ref{inversion-2} implies that
$$
\mu_p(\widetilde{\varphi} \ast \widehat{\psi}) =  \sum_{j=0}^p \binom{p}{j}  b_j \mu_{p-j}(\widehat{\psi}) =  \sum_{j=0}^p \binom{p}{j}  b_j \mu_{p-j}(\chi) =  c_p, \qquad p \in \N,
$$
and so $(ii)$ holds.
\end{proof}

\begin{corollary}\label{coro_notbijection_origin}
Let $\M$ be a weight sequence satisfying $\lc$ and $\dc$. Then,  $\mathcal{M}: \mathcal{F}(\DM(0,1)) \rightarrow \Lambda_{\M}$ and
$\mathcal{M}^0: \DM(0,1) \rightarrow \Lambda_{\widecheck\M}$ are never bijective.
\end{corollary}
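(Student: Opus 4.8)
The plan is to argue exactly as in Corollary \ref{cor-bij}, now combining Theorems \ref{inj-origin} and \ref{surj-origin-modified}. Suppose, towards a contradiction, that $\mathcal{M}^0 : \DM(0,1) \rightarrow \Lambda_{\widecheck\M}$ is bijective; the case of $\mathcal{M} : \mathcal{F}(\DM(0,1)) \rightarrow \Lambda_{\M}$ is entirely analogous, since its injectivity and surjectivity are governed by the same conditions on $\M$ via Theorems \ref{inj-origin} and \ref{surj-origin-modified}, so it suffices to treat $\mathcal{M}^0$.

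From surjectivity and Theorem \ref{surj-origin-modified} we obtain $\gamma(\M) > 1$; in particular $\gamma(\M) > 0$, so $\M$ satisfies $\snq$, hence also $\nq$ (take $p = 0$ in the definition of $\snq$). Thus $\M$ satisfies $\lc$, $\dc$ and $\nq$, so Theorem \ref{inj-origin} is applicable. From injectivity it then yields $\sum_{p=0}^\infty ((p+1)m_p)^{-1/2} = \infty$, which by Theorem \ref{inj-borel} forces $\om(\M) \le 1$. Since $\gamma(\NN) \le \om(\NN)$ for every weight sequence $\NN$ satisfying $\lc$ (as recalled in the proof of Corollary \ref{cor-bij}), this gives $\gamma(\M) \le 1$, contradicting $\gamma(\M) > 1$.

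No genuine analytic obstacle arises here: the statement is a purely formal consequence of the two characterization theorems together with the inequality $\gamma \le \om$. The only point requiring care is that Theorem \ref{inj-origin} is stated under the standing hypothesis $\nq$, which the corollary does not assume; this is why one should first extract $\nq$ from surjectivity (via $\gamma(\M) > 1 \Rightarrow \snq \Rightarrow \nq$) before invoking it. Equivalently, one may dispose of the non-$\nq$ case at the outset: if $\M$ fails $\nq$, then $\DM(0,1) = \{0\}$ by the Denjoy-Carleman theorem while $\Lambda_{\widecheck\M} \neq \{0\}$, so $\mathcal{M}^0$ is not surjective (and likewise $\mathcal{M}$ on $\mathcal{F}(\DM(0,1)) = \{0\}$ is not surjective); in the complementary case $\M$ satisfies $\nq$ and the displayed argument applies verbatim.
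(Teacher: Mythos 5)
Your proof is correct and follows essentially the same route as the paper, which simply reduces the corollary to the argument of Corollary \ref{cor-bij}: the divergence condition forced by injectivity gives $\om(\M)\le 1$, hence $\ga(\M)\le 1$, contradicting the necessary condition $\ga(\M)>1$ for surjectivity. Your explicit handling of the $\nq$ hypothesis (extracting it from $\ga(\M)>1\Rightarrow\snq$, or disposing of the non-$\nq$ case where $\DM(0,1)=\{0\}$) is a point the paper leaves implicit, but it does not change the approach.
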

\begin{proof}
In view of Theorems \ref{inj-origin} and \ref{surj-origin-modified}, this can be shown in exactly the same way as Corollary \ref{cor-bij}.
\end{proof}

\begin{remark}
A careful inspection of the proofs of Theorems~\ref{surj-stiel-modified} and~\ref{surj-origin-modified} shows that, as long as $\M$ is a weight sequence satisfying $\lc$ and $\dc$, the surjectivity of the Borel mapping $\mathcal{B}: \mathcal{A}_{\M}(\HH) \rightarrow \Lambda_{\M}$ implies, not only $\ga(\M) > 1$, but also the surjectivity of all the moment mappings considered in both statements. Although the condition  $\mg$, combined with $\lc$ and $\ga(\M) >1$, allows one to prove the surjectivity of $\mathcal{B}$ (see Theorem \ref{surj-borel}), in some cases where $\mg$ fails one can still show that $\mathcal{B}$ is surjective. A very classical example is that of the so-called $q$-Gevrey sequences, $\M_q=(q^{p^2})_{p\in\N}$, where $q>1$. These sequences satisfy $\lc$, $\dc$ and $\ga(\M_q) > 1$ (indeed, $\ga(\M_q)=\infty$) but not $\mg$. One can prove (see~\cite[Subsect.~3.3]{thilliez} for some hints and references) that $\mathcal{B}: \mathcal{A}_{\M_q}(\HH) \rightarrow \Lambda_{\M_q}$
is surjective and so the previous considerations apply to this case.
\end{remark}

\noindent\textbf{Acknowledgements}: The first author is supported by FWO-Vlaanderen, via the postdoctoral grant 12T0519N. The last two authors are partially supported by the Spanish Ministry of Economy, Industry and Competitiveness under the project MTM2016-77642-C2-1-P.\par
The authors wish to express their gratitude to Prof. Manuel N\'u\~nez, from the Universidad de Valladolid (Spain), for making them aware of Theorem~\ref{thPrivalov}.

\end{document}